\newenvironment{customlem}[1]
  {\innercustomlem}
  {\endinnercustomlem}
\long\def\symbolfootnote[#1]#2{\begingroup
\def\thefootnote{\fnsymbol{footnote}}\footnote[#1]{#2}\endgroup}
\DeclareMathOperator*{\supess}{ess\,sup}
\titleformat{\section}{\large\bfseries}{\thesection.}{.5em}{}
\titlespacing*{\section}{0pt}{*3}{*2}
\titleformat{\subsection}{\normalfont\bfseries}{\thesubsection.}{.5em}{}
\titlespacing*{\subsection} {0pt}{*3}{*2}
\titleformat{\subsubsection}{\normalfont\bfseries}{\thesubsubsection.}{.5em}{}
\titlespacing*{\subsubsection} {0pt}{*3}{*2}
\theoremstyle{plain} 
\newtheorem{theorem}{Theorem}[section]
\newtheorem{lemma}{Lemma}[section]
\theoremstyle{definition} 
\newtheorem{remark}{Remark}[section]
\numberwithin{equation}{section} 
\begin{document}

\title{Sequential Subspace Change-Point Detection}

\author[1]{Liyan Xie\thanks{lxie49@gatech.edu}}
\author[1]{Yao Xie\thanks{yao.xie@isye.gatech.edu}}
\author[2,3]{George V. Moustakides\thanks{moustaki@upatras.gr}}
\affil[1]{School of Industrial and Systems Engineering, Georgia Institute of Technology}
\affil[2]{Department of Computer Science, Rutgers University}
\affil[3]{Department of Electrical and Computer Engineering, University of Patras}

\date{}

\maketitle

%
%
%

\begin{abstract}
We consider the online monitoring of multivariate streaming data for changes that are characterized by an unknown subspace structure manifested in the covariance matrix. In particular, we consider the covariance structure changes from an identity matrix to an unknown spiked covariance model. We assume the post-change distribution is unknown, and propose two detection procedures: the Largest-Eigenvalue Shewhart chart and the Subspace-CUSUM detection procedure. We present theoretical approximations to the average run length (ARL) and the expected detection delay (EDD) for the Largest-Eigenvalue Shewhart chart and also provide analysis for tuning parameters of the Subspace-CUSUM procedure. The performance of the proposed methods is illustrated using simulation and real data for human gesture detection and seismic event detection.
\end{abstract}


\section{INTRODUCTION}\label{sec:intro}

Detecting the change from high-dimensional streaming data is a fundamental problem in various applications such as video surveillance \citep{sultani2018real}, sensor networks \citep{xie2013sequential}, wearable sensors \citep{sprint2016unsupervised}, and seismic events detection \citep{li2018high}. In many scenarios, the change happens to the covariance structure and can be represented as a low-rank subspace to capture the similarity of signal waveforms observed at multiple sensors. 
We consider the fundamental problem of detecting such a change in the covariance matrix that shifts from an identity matrix to a spiked covariance model \citep{johnstone2001distribution}. Different from the offline hypothesis test considered in \cite{berthet2013optimal}, we assume a sequential setting, where the goal is to detect such a change as quickly as possible after it occurs.

A formal description of the problem is as follows. Assume a sequence of multivariate observations $x_1, x_2, \ldots,x_t,\ldots$, where $x_t \in \mathbb{R}^k$ and $k$ is the data dimension. At a certain time $\tau$, the distribution of the observation changes. 
In particular, we are interested in structural changes that happen to the covariance matrix, which we describe below: (1) the {\it emerging subspace}, meaning the change is a subspace emerging from a noisy background and thus the covariance matrix changes from an identity matrix to a spiked covariance matrix; (2) the {\it switching subspace}, meaning that the signals are along with different subspaces before and after the change, resulting the covariance matrix to change from one spiked covariance matrix to another. The emerging subspace problem can arise, for instance, from weak signal detection for seismic sensor arrays \citep{sprint2016unsupervised}, and the switching subspace detection can arise from monitoring principal component analysis (PCA) for streaming data \citep{balzano2018streaming}. The switching subspace problem, as we will show, can be reduced to the emerging subspace problem; therefore, we focus on the emerging subspace problem. 

The main contribution of this paper is two-fold. 
From the methodology perspective, we propose two sequential detection procedures: the Largest-Eigenvalue Shewhart chart and the Subspace-CUSUM procedure. The Largest-Eigenvalue Shewhart chart computes the largest eigenvalue of the sample covariance matrix over a sliding window and detects a change when the statistic exceeds the threshold. 
The Subspace-CUSUM is derived based on the likelihood ratio following the approach of classical CUSUM \citep{page1954continuous}, but instead of assuming the parameters are fully specified, we estimate the parameters and plug-in, which is analogous to the generalized likelihood ratio (GLR) statistic \citep{lai1995sequential}.  
From the theoretical perspective, we provide a theoretical analysis of the proposed procedures, which facilitates efficient calibration of the parameters. We consider two commonly used metrics: the {\it average run length} (ARL) and the {\it expected detection delay} (EDD). Theoretical approximations can help us determine the threshold in the detection procedure efficiently. Moreover, building on Anderson's results for the distribution of eigenvectors \citep{anderson1963asymptotic}, we provide theoretical guidelines on how to choose the parameters involved in the Subspace-CUSUM procedure. 

The proposed detection procedures are computationally efficient since they only require computing the leading eigenvalue and eigenvector of the sample covariance matrix, respectively. They are widely applicable to real data whenever there is a low-rank subspace change. For example, we have demonstrated its use in human activity detection from wearable sensors data and seismic event detection. 


\subsection{Related Work} \label{sec:literature}  
In change-point detection and industrial quality control, commonly used methods can be categorized into Shewhart chart, CUSUM, and generalized likelihood ratio (GLR) types of detection procedures.

Shewhart charts can be viewed as scan-statistics over time. A change is detected when the process is out-of-control, i.e., the detection statistic falls out of the control limit.  
A commonly used Shewhart chart for multivariate observations is the Hotelling $T^2$ control chart \citep{hotelling1947multivariate}, which can detect both mean and covariance shifts and the control limits are set through chi-square distributions. Modified $T^2$ charts based on principal component analysis are considered in \cite{jackson1959quality,jackson1979control}. The $U^2$ multivariate control chart in \cite{runger1996projections} considers detecting the mean shift in a known subspace. Those work does not consider the largest eigenvalue as a detection statistic. 

While Shewhart charts use the current subgroup samples to compute the detection statistic, the CUSUM procedure utilizes all past samples and updates the detection statistic recursively based on the log-likelihood ratio \citep{page1954continuous}. 
Multivariate CUSUM procedure for detecting mean shift has been developed in \cite{pignatiello1990comparisons} and a more recent work \citep{bodnar2005multivariate} presents CUSUM based on projected data. In classic CUSUM, the pre-change and post-change distributions are specified completely. The Subspace-CUSUM procedure here is not a typical CUSUM since we estimate the unknown subspace after the change. 

Usually, the post-change distributions or their parameters are unknown and hard to pre-specify. One solution is to set the post-change parameter to represent the ``smallest possible change'' of interest. However, when there is a parameter mismatch, the CUSUM procedure suffers from a performance loss. The generalized likelihood ratio (GLR) procedure is introduced to handle unknown post-change distributions \citep{lai1995sequential}. 
The Subspace-CUSUM procedure here is different from the GLR procedure since we do not estimate the full log-likelihood function; instead, we only estimate the subspace and introduce an additional parameter to control the performance. 

Covariance shift detection has been considered in the past literature using various detection statistics. A multivariate CUSUM based on likelihood functions of multivariate Gaussian is studied in \cite{healy1987note} considering a specific setting where the covariance changes from $\Sigma$ to $c\Sigma$ for a constant $c$. The determinant of the sample covariance matrix was used in \cite{alt1985multivariate} and \cite{alt1988multivariate} to detect the covariance change. \cite{chan2001cumulative} considers a CUSUM chart for monitoring covariance shift using the projection pursuit \citep{huber1985projection} and likelihood ratio, with simulation studies on the performance of the proposed methods. Offline change detection of covariance change is studied in \cite{chen2004statistical} using the Schwarz information criterion \citep{schwarz1978estimating}, where the change-point location must satisfy certain regularity condition to ensure the existence of the maximum likelihood estimator. Recently, \cite{wang2017optimal} uses the wide binary segmentation through independent projection (WBSIP) to recover the change-points for the covariance matrix in the offline setting. \cite{avanesov2018change} uses the distance between empirical precision matrices to detect abrupt changes in covariance for the offline case. Classical approaches usually consider the general setting, and here we are interested in detecting the structural change, i.e., spiked covariance matrix.  

Recent work has also considered other types of structured covariance changes. The detection of a shift in an off-diagonal sub-matrix of the covariance matrix is studied in \cite{arias2012detection} using likelihood ratios. The detection of switching subspaces is studied in  \cite{jiao2018subspace} based on a CUSUM type procedure, but they only estimate the pre-change subspace using historical data and assume the post-change subspace is known, this is different from our work since we also estimate the post-change subspace. \cite{zhang2018dynamic} develops an offline modeling framework for multivariate functional data based on sparse subspace clustering. 
 
The Largest-Eigenvalue Shewhart Chart is related to \cite{berthet2013optimal}, which studies the sparse principal component test based on sparse eigenvalue statistics. The largest eigenvalue statistic is shown to be asymptotically minimax optimal in \cite{berthet2013optimal} for detecting whether there exists a sparse and low-rank component. A natural sequential version of this idea is to use a sliding window and estimate the largest eigenvalue of the corresponding sample covariance matrix. However, this sequential version does not enjoy any form of (asymptotic) optimality.  

\subsection{Organization}
The rest of the paper is organized as follows. 
Section\,\ref{sec:formulation} presents the formulation of the emerging and switching subspace problems and show a unified framework. Section\,\ref{sec:detect} presents the proposed two sequential change detection procedures: the Largest-Eigenvalue Shewhart chart and Subspace-CUSUM procedure. Section\,\ref{sec:theo} presents theoretical approximations and bounds for the average run length and the expected detection delay of the Largest-Eigenvalue Shewhart chart, as well as theoretical calibration and parameter choice for Subspace-CUSUM procedure. Section\,\ref{sec:numerical} contains simulation studies to demonstrate the performance of the proposed algorithms in different settings. Section\,\ref{sec:data} shows two real data examples using human gesture detection and seismic event detection. Finally, Section\,\ref{sec:conclusion} contains our concluding remarks.

\section{PROBLEM SETUP} \label{sec:formulation} 
We first introduce the spiked covariance model considered in \cite{johnstone2001distribution}, which assumes that a small number of directions explain most of the variance. For simplicity, we consider the spiked covariance model of rank-one in this paper. The results can be generalized to the case where rank is greater than one using similar ideas. 
In particular, the rank-one spiked covariance matrix is given by 
\[
\Sigma = \sigma^2I_k + \theta uu^\intercal,
\]
where $I_k$ denotes an identity matrix of size $k$;  $\theta$ is the signal strength; $u \in \mathbb{R}^k$ represents a basis for the subspace with unit norm $\left\Vert u \right\Vert=1$; $\sigma^2$ is the noise variance, which will be considered known since it can be estimated from training data. It is possible to consider $\sigma^2$ unknown as well and provide estimates of this parameter along with the necessary estimates of $u$. However, to simplify our presentation, we decide to consider $\sigma^2$ known. The Signal-to-Noise Ratio (SNR) is defined as 
$\rho = \theta/\sigma^2$. 

Formally, the \textit{emerging} subspace problem can be cast as follows
\begin{equation}
\begin{array}{ll}
x_t  \stackrel{\text{iid}}{\sim} \mathcal{N}(0, \sigma^2I_k), &t = 1,2,\ldots,\tau, \\
x_t \stackrel{\text{iid}}{\sim} \mathcal{N}(0,  \sigma^2I_k+ \theta uu^\intercal), &t = \tau+1,\tau+2,\ldots
\end{array} 
\label{eq:hypothesis}
\end{equation}
where $\tau$ is the unknown change-point that we would like to detect from data that are acquired sequentially. 
Similarly, the \textit{switching} subspace problem can be formulated as follows
\begin{equation}
\begin{array}{ll}
x_t  \stackrel{\text{iid}}{\sim} \mathcal{N}(0, \sigma^2I_k + \theta u_1u_1^\intercal), &\!\!\!t = 1,2,\ldots,\tau, \\
x_t \stackrel{\text{iid}}{\sim} \mathcal{N}(0,  \sigma^2I_k  + \theta u_2 u_2^\intercal),&\!\!\!t = \tau+1,\tau+2,\ldots
\end{array}
\label{eq:hypothesis_switch}
\end{equation}
where $u_1, u_2 \in \mathbb R^{k}$ represent bases for the subspaces before and after the change, with $\|u_1\| = \|u_2\| = 1$ and $u_1$ is considered known. In both settings, our goal is to detect the change as quickly as possible, subject to the constraint that false detections occurring before the true change-point are very rare. 

The switching subspace problem \eqref{eq:hypothesis_switch} can be reduced into the emerging subspace problem \eqref{eq:hypothesis} by a simple data projection. In detail, we can select any orthonormal matrix $Q \in \mathbb{R}^{(k-1)\times k}$ such that
\[ Q u_1 = 0, \quad QQ^\intercal = I_{k-1},\]
which means that all rows of $Q$ are orthogonal to $u_1$, and they are orthogonal to each other and have unit norm. Then, we project each observation $x_t$ using the projection matrix $Q$ onto a $k-1$ dimensional space and obtain a new sequence 
\[ y_t = Qx_t \in \mathbb{R}^{k-1}, t = 1,2,\ldots . \]
Then $y_t$ is a zero-mean random vector with covariance matrix $\sigma^2I_{k-1}$ before the change and $\sigma^2I_{k-1} + \theta Qu_2u_2^\intercal Q^\intercal$ after the change. 
Let $u = Qu_2/\left\Vert Qu_2 \right\Vert$, and 
\[ \tilde{\theta} = \theta \left\Vert Qu_2 \right\Vert^2 = \theta [ 1- (u_1^\intercal u_2)^2]. \]
Thus, problem \eqref{eq:hypothesis_switch} can be reduced to the following
\begin{equation}
\begin{array}{ll}
y_t  \stackrel{\text{iid}}{\sim} \mathcal{N}(0, \sigma^2I_{k-1}), &\!\!\!t = 1,2,\ldots,\tau, \\
y_t \stackrel{\text{iid}}{\sim} \mathcal{N}(0,  \sigma^2I_{k-1}  +\tilde{ \theta} u u^\intercal),&\!\!\!t = \tau+1,\tau+2,\ldots
\end{array}
\label{eq:hypothesis_switch_transform}
\end{equation}
Note that this way the switching subspace problem is reduced into the emerging subspace problem, where the new signal power $\tilde \theta$ depends on the angle between $u_1$ and $u_2$, which is consistent with our intuition. 

We would like to emphasize that by projecting the observations onto a lower-dimensional space, we lose information, suggesting that the two versions of the problem are not exactly equivalent. Indeed, the optimum detector for the transformed data in \eqref{eq:hypothesis_switch_transform} and the one for the original data in \eqref{eq:hypothesis_switch} do not coincide. This can be easily verified by computing the corresponding CUSUM tests and their optimum performance. Despite this difference, it is clear that with the proposed approach, we put both problems under the same framework, offering computationally simple methods to solve the original problem in \eqref{eq:hypothesis_switch}. Consequently, in the following analysis, we focus solely on problem \eqref{eq:hypothesis}.

\section{DETECTION PROCEDURES} \label{sec:detect}

We propose two online methods: the Largest-Eigenvalue Shewhart chart and the Subspace-CUSUM procedure. 
Below, denote by $\mathbb{P}_\tau$ and $\mathbb{E}_\tau$ the probability and expectation induced when there is a change-point at the \textit{deterministic} time $\tau$. Under this definition, $\mathbb{P}_\infty$ and $\mathbb{E}_\infty$ is the probability and the expectation under the nominal regime (change never happens) while $\mathbb{P}_0$ and $\mathbb{E}_0$ the probability and expectation under the alternative regime (change happens before we take any data).

\subsection{Largest-Eigenvalue Shewhart Chart} \label{detect:eigen}

Motivated by the test statistic in \cite{berthet2013optimal}, we consider a Shewhart chart by computing the largest eigenvalue of the sample covariance matrix repeatedly over a sliding window. 
Assume the window length is $w$. For each time $t>0$, the \textit{un-normalized} sample covariance matrix using the available samples is given by 
\begin{equation}
\hat{\Sigma}_{t,\min\{t,w\}}=\left\{
\begin{array}{l}
x_1 x_1^\intercal+\cdots+x_t x_t^\intercal,~\text{for}~t<w\\
x_{t-w+1} x_{t-w+1}^\intercal+\cdots+x_t x_t^\intercal,~\text{for}~t\geq w.
\end{array}\right.
\label{sampleCov}
\end{equation}
We note that for $t=1$ the matrix contains a single outer product and as time progresses the number of outer products increases linearly until it reaches $w$. After this point, namely for $t\geq w$, the number of outer products remains equal to $w$.

Let $\lambda_{\max}(X)$ denote the largest eigenvalue of a symmetric matrix $X$. We define the {\it Largest-Eigenvalue Shewhart chart}, as the one that stops according to the following rule:
\begin{equation}
\label{eigenvalue_procedure}
T_{\rm E} = \inf\{t>0: \lambda_{\max}(\hat{\Sigma}_{t,\min\{t,w\}}) \geq b \}, 
\end{equation}
where $b > 0$ is a constant threshold selected to meet a suitable false alarm constraint.
We need to emphasize that we do {\it not} divide by $\min\{t,w\}$ when forming the un-normalized sample covariance matrix. As we explain in Section\,\ref{sec:worstedd}, it is better for $T_{\rm E}$ to always divide by $w$ instead of $\min\{t,w\}$. Consequently, we can omit the normalization constant $w$ from our detection statistics by absorbing it into the threshold.

\subsection{Subspace-CUSUM}\label{detect:exactcusum}

The CUSUM procedure \citep{page1954continuous,siegmund2013sequential} is the most popular sequential test for change detection. When the observations are independent and identically distributed before and after the change, CUSUM is known to be exactly optimum \citep{moustakides1986optimal} in the sense that it solves a very well defined constrained optimization problem introduced in \cite{lorden1971procedures}. However, the CUSUM procedure can only be applied when we have exact knowledge of the pre- and post-change distributions. For our problem, this requires complete specification of all parameters, namely the subspace $u$, noise power $\sigma^2$, and SNR $\rho$. In this section, we first introduce the exact CUSUM formulation and then present our Subspace-CUSUM procedure.

\subsubsection{Exact CUSUM}

To derive the CUSUM procedure, let $f_\infty(\cdot),f_0(\cdot)$ denote the pre- and post-change probability density function (PDF) of the observations. Then the CUSUM statistics is defined by maximizing the log-likelihood ratio statistic over all possible change-point locations
\begin{equation*}
S_ t =  \max_{1 \leq j  \leq t } \sum_{i=j}^t \log \frac{f_0(x_i)}{f_{\infty}(x_i)},
\label{cusum_exact}
\end{equation*}
which has the recursive implementation
\begin{equation}
S_t = (S_{t-1})^+ + \log \frac{f_0(x_t)}{f_{\infty}(x_t)},
\label{cusum_recursive}
\end{equation}
that enables its efficient calculation \citep{moustakides1986optimal}, where $x^{+} = \max\{0,x\}$. The CUSUM stopping time in turn is defined as
\begin{equation}
T_{\rm C} = \inf\{t>0: S_t \geq b\},
\label{cusum_procedure}
\end{equation}
where $b$ is a threshold selected to meet a suitable false alarm constraint.
For our problem of interest \eqref{eq:hypothesis} we can derive that
\begin{equation*}\label{expansion}
\begin{aligned}
\log \frac{f_0(x_t)}{f_\infty(x_t)}  & = \log\bigg[ \frac{[(2\pi)^{k}|\sigma^2I_k + \theta u u^\intercal|]^{-1/2}}{[(2\pi)^{k}\sigma^{2k}]^{-1/2}} \times
 \frac{\exp\{ - x_t^\intercal(\sigma^2I_k+\theta u u^\intercal)^{-1}x_t / 2  \} }{\exp\{- x_t^\intercal x_t /(2\sigma^2) \} } \bigg] \\
& = \log\left[  \left|I_k + \rho u u^\intercal \right|^{-\frac 1 2} 
\exp\left\{ \frac 1 2 \frac{\theta}{\theta+\sigma^2} \frac{(u^\intercal x_t)^2}{\sigma^2}  \right\} \right]  \\
&  =\frac{\rho}{2\sigma^2(1+\rho)} \Big\{(u^\intercal x_t)^2- \sigma^2 \left(1+ \frac1{\rho}\right)\log(1+\rho)\Big\}.
\end{aligned}
\end{equation*}
The second equality is due to the matrix inversion lemma \citep{woodbury1950inverting} that allows us to write
\[
(\sigma^2I_k+\theta u u^\intercal)^{-1} = 
 \frac1{\sigma^2}I_k - \frac{\theta}{\theta+\sigma^2}\frac{uu^\intercal}{\sigma^2},
\]
which, after substitution into the equation, yields the desired result. Note that the multiplicative factor $\rho/[2\sigma^2(1+\rho)]$ is positive, so we can omit it from the log-likelihood ratio when forming the CUSUM statistic in \eqref{cusum_recursive}. This leads to
\begin{equation}
S_t = (S_{t-1})^+ + (u^\intercal x_t)^2 - \sigma^2\left(1+\frac{1}{\rho}\right)\log(1+\rho).
\label{cusum_recur}
\end{equation}

\begin{remark}\label{drift_property_1}
We can show that the increment term in \eqref{cusum_recur}, i.e.,
\[(u^\intercal x_t)^2 - \sigma^2\left(1+\frac{1}{\rho}\right)\log(1+\rho),
\] has the following property: its expected value is negative under the pre-change and positive under the post-change probability measure. The proof relies on a simple argument based on Jensen's inequality \citep{rudin1987real}. Due to this property, before the change, the CUSUM statistics $S_t$ will oscillate near $0$ while it will exhibit, on average, a positive drift after the occurrence of the change forcing it, eventually, to hit or exceed the threshold.
\end{remark}

\subsubsection{Subspace-CUSUM procedure}\label{sec:S-CUSUM}

Usually the subspace $u$ and SNR $\rho$ are unknown. In this case it is impossible to form the exact CUSUM statistic depicted in \eqref{cusum_recur}. One option is to estimate the unknown parameters and substitute them back into the likelihood function. Here we propose to estimate only $u$ and introduce a new {\it drift} parameter $d$ which plays the same role as $\sigma^2(1+1/\rho)\log(1+\rho)$, this leads to the following Subspace-CUSUM update
\begin{equation}
\mathcal S_t = (\mathcal S_{t-1})^{+} + (\hat{u}_{t+w}^\intercal x_t)^2 - d.
\label{sscusum_update}
\end{equation}
To apply \eqref{sscusum_update}, we need to specify $d$ and of course provide the estimate $\hat u_{t+w}$. Regarding the latter we simply use the \textit{unit-norm} eigenvector corresponding to the largest eigenvalue of $\hat \Sigma_{t+w,w}$ depicted in \eqref{sampleCov}. We denote the estimator of $u$ as $\hat{u}_{t+w}$ because at time $t$ the estimate will rely on the data $x_{t+1},\ldots,x_{t+w}$ that are in the ``future'' of $t$. Practically, this is always possible by properly delaying our data by $w$ samples. Stopping occurs similarly to CUSUM, that is
\begin{equation}\label{sscusum_stop}
T_{\rm SC} = \inf\{t>0: \mathcal S_t \geq b\}.
\end{equation}
Of course, in order to be fair, at the time of stopping we must make the appropriate correction, namely if $\mathcal{S}_t$ exceeds the threshold at $t$ for the first time, then the actual stopping takes place at $t+w$. The reason we use estimates based on ``future'' data is to make $x_t$ and $\hat{u}_{t+w}$ {\it independent} which in turn will help us decide what is the appropriate choice for the drift constant $d$ in Section\,\ref{sec:metric}.

\begin{remark} 
An alternative possibility is to use the generalized likelihood ratio (GLR) statistic, where both $\rho$ and $u$ are estimated for each possible change location $\kappa$. The GLR statistic is 
\[
\max_{\kappa<t} \left\{-\frac{t-\kappa}{2} \log(1+\hat{\rho}_{\kappa,t}) + \frac1{2\sigma^2}\frac{\hat{\rho}_{\kappa,t}}{1+\hat{\rho}_{\kappa,t}}\sum_{i = \kappa+1}^t (\hat{u}_{\kappa,t}^\intercal x_i )^2 \right\},
\] 
where $\hat{\rho}_{\kappa,t}$, $\hat{u}_{\kappa,t}$ are estimated from samples $\{x_i\}_{i = \kappa+1}^t$. However, this computation is more intensive since there is no recursive implementation for the GLR statistic, furthermore it requires growing memory. There are finite memory versions \citep{lai1999efficient}, however are equally complicated in their implementation. Therefore, we do not consider the GLR statistic in this paper.
\end{remark}

\section{THEORETICAL ANALYSIS} \label{sec:theo}
To fairly compare the detection procedures discussed in the previous section, we need to calibrate them properly. The calibration process must be consistent with the performance measure we are interested in.
For a given stopping time $T$ we measure false alarms through the {\it average run length} (ARL) expressed with $\mathbb{E}_\infty[T]$. For the detection capability of $T$ we use the {\it worst-case expected detection delay} (EDD) defined in \cite{lorden1971procedures}
\begin{equation}
\sup\limits_{\tau \geq 0} \supess \mathbb{E}_\tau[ (T - \tau)^{+} | T>\tau, x_1, \ldots, x_\tau],
\label{eq:lorden}
\end{equation}
which considers the worst possible data before the change (expressed through the $\supess$) and the worst possible change-time $\tau$.

In this section, we first discuss the scenarios that lead to the worst-case detection delay for the proposed procedures. Then we characterize the ARL and EDD of the Largest-Eigenvalue Shewhart chart. In doing so, we will also introduce some of the mathematical tools that can be used for the analysis of Subspace-CUSUM. The theoretical characterization of ARL is very important because it can serve as a guideline on how to choose the threshold $b$ used in the detection procedure. Without theoretical analysis, people usually use Monte Carlo simulation to set the threshold, which can be time-consuming when the problem structure is complicated. Therefore a theoretical way to choose the threshold can be beneficial, especially for online change-point detection where computational efficiency is of great importance. 


\subsection{Worst-Case EDD}\label{sec:worstedd}
We now consider scenarios that lead to the worst-case detection delay. For the Largest-Eigenvalue Shewhart chart, assume $1\leq t-w+1\leq\tau< t$. Since for the detection we use $\lambda_{\max} (\hat \Sigma_{t,w})$ and compare it to a threshold, it is clear that the worst-case data before $\tau$ are the ones that will make $\lambda_{\max}$ as small as possible. We observe that
\begin{equation*}
\begin{aligned}
\lambda_{\max} (\hat{\Sigma}_{t,w})&=\lambda_{\max} ( x_{t-w+1} x_{t-w+1}^\intercal + \cdots +
x_{\tau} x_{\tau}^\intercal+\cdots+ x_{t} x_t^\intercal)\\
&\geq\lambda_{\max} ( x_{\tau+1} x_{\tau+1}^\intercal +
\cdots+ x_{t} x_{t}^\intercal) =\lambda_{\max} (\hat{\Sigma}_{t,t-\tau}),
\end{aligned}
\label{eq:worst}
\end{equation*}
which corresponds to the data $x_{t-w+1},\ldots,x_\tau$, before the change, being all equal to zero. In fact, the worst-case scenario at any time instant $\tau$ is equivalent to forgetting all data before and including $\tau$ and restarting the procedure from $\tau+1$ using up to $w$ outer products in the un-normalized sample covariance matrix, exactly as we do when we start at time 0. Due to stationarity, this suggests that we can limit ourselves to the case $\tau=0$ and compute $\mathbb{E}_0[T_{\rm E}]$ and this will constitute the worst-case EDD. Furthermore, the fact that in the beginning we do not normalize with the number of outer products, is beneficial for $T_{\rm E}$ since it improves its ARL. 

We should emphasize that if we do not force the data before the change to become zero and use simulations to evaluate the detector with a change occurring at some time different from 0, then it is possible to arrive at misleading conclusions. Indeed, it is not uncommon for this test to appear outperforming the exact CUSUM test for low ARL values. Of course this is impossible since the exact CUSUM is optimum for \textit{any} ARL in the sense that it minimizes the \textit{worst-case} EDD depicted in \eqref{eq:lorden}.

Let us now consider the worst-case scenario for Subspace-CUSUM. We observe that 
\[
\mathcal S_t = (\mathcal S_{t-1})^{+} + (\hat{u}_{t+w}^\intercal x_t)^2 - d \geq 0 +(\hat{u}_{t+w}^\intercal x_t)^2 - d,
\] 
suggesting that when $\mathcal S_t$ restarts this is the worst it can happen for the detection delay. Therefore, the well-known property of the worst-case scenario in the exact CUSUM carries over to Subspace-CUSUM. Again, because of stationarity, this allows us to fix the change-point time at $\tau=0$. Of course, as mentioned before, because $\hat{u}_{t+w}$ uses data coming from the future of $t$, if our detector stops at some time $t$ (namely when for the first time we experience $\mathcal{S}_t\geq b$) then the \textit{actual} time of stopping must be corrected to $t+w$. A similar correction is not necessary for CUSUM because this test has the exact information for all parameters.

Threshold $b$ is chosen so that the ARL meets a pre-specified value. In practice, $b$ is usually determined by simulation. More specifically, by simulating multiple streams of data from pre-change distribution, we can obtain the average run length for different thresholds. Therefore the threshold can be determined by the simulation results.  

A very convenient tool in accelerating the estimation of ARL (which is usually large) is the usage of the following formula that connects the ARL of CUSUM to the average of the sequential probability ratio test (SPRT) stopping time \citep{siegmund2013sequential}
\begin{equation}
\mathbb{E}_\infty [T_{\rm C}] = \frac{\mathbb{E}_\infty[T_{\rm SPRT}]}{\mathbb{P}_\infty (S_{T_{\rm SPRT}} \ge b)}, 
\label{eq:CUSUM-SPRT}
\end{equation}
where the SPRT stopping time is defined as
\[
T_{\rm SPRT} = \inf\{t>0: S_t \notin [0, b]\}.
\]
The validity of \eqref{eq:CUSUM-SPRT} relies on the CUSUM property that, after each restart, $S_t$ is independent of the data before the time of the restart. Unfortunately, this key characteristic is no longer valid in the proposed Subspace-CUSUM scheme since $\hat{u}_{t+w}$ uses data from the future of $t$. We could, however, argue that this dependence is weak. Indeed, as we have seen in Lemma\,\ref{lem:eigenvector}, each $\hat{u}_t$ is equal to $u$ plus some small random perturbation (estimation error with the power of the order of $1/w$), with these perturbations being practically independent in time. As we observed with numerous simulations, estimating the ARL directly and through \eqref{eq:CUSUM-SPRT} (with $S_t$ replaced by $\mathcal{S}_t$), results in almost indistinguishable values even for moderate window sizes $w$. This suggests that we can use \eqref{eq:CUSUM-SPRT} to estimate the ARL of the Subspace-CUSUM as well. As we mentioned, in the final result, we need to add $w$ to account for the future data used by the estimate $\hat{u}_{t+w}$.

\subsection{Analysis of Largest-Eigenvalue Shewhart Chart}

\subsubsection{Approximate ARL of Largest-Eigenvalue Shewhart chart}

In this section, we first introduce some connection with random matrix theory, which are the building blocks for the theoretical derivation. Then we provide the approximation to ARL as a function of threshold $b$ after taking into account the {\it temporal correlation} between detection statistics. The comparison with simulation results shows the high accuracy of our results. 

The study of ARL requires the understanding of the property of the largest eigenvalue under the null hypothesis, i.e., the samples are i.i.d. Gaussian random vectors with zero-mean and identity covariance matrix. 
To characterize the distribution of the largest eigenvalue, \cite{johnstone2001distribution} uses the  Tracy-Widom law \citep{TracyWidom96}. Define the center constant $\mu_{w,k}$ and scaling constant $\sigma_{w,k}$
\begin{equation}
\begin{aligned}
 \mu_{w,k}&=\big(\sqrt{w-1}+\sqrt{k}\big)^2, \\
 \sigma_{w,k}&=\big(\sqrt{w-1}+\sqrt{k}\big)\left(\frac1{\sqrt{w-1}}+\frac1{\sqrt{k}}\right)^{1/3}. 
 \end{aligned}
 \label{eq:parameter}
 \end{equation} 
 If $k/w \rightarrow \gamma <1$, then the centered and scaled largest eigenvalue converges in distribution to a random variable $W_1$ with the so-called Tracy-Widom law of order one \citep{johnstone2001distribution}:
\begin{equation}
 \frac{\lambda_{\max}(\hat{\Sigma}_{w})-\mu_{w,k}}{\sigma_{w,k}} \rightarrow W_1.
 \label{eq:tw} 
 \end{equation}
The Tracy-Widom law can be described in terms of a partial differential equation and the Airy function, and its tail can be computed numerically (using for example the R-package {\small \textsf{RMTstat}}).

\begin{remark}[Connection with random matrix theory]
There has been an extensive literature on the distribution of the largest eigenvalue of the sample covariance matrix, see, e.g., \cite{johnstone2001distribution, yin1988limit, baik2006eigenvalues, jiang2017rare}. The so-called {\it bulk} \citep{edelman2013random} results are typically used for eigenvalue distributions. It treats a continuum of eigenvalues, and the {\it extremes}, which are the (first few) largest and smallest eigenvalues. Assume there are $w$ samples which are $k$-dimensional Gaussian random vectors with zero-mean and identity covariance matrix. Let $\hat{\Sigma}_w=\sum_{i=1}^w x_i x_i^\intercal$ denote the un-normalized sample covariance matrix. If $k/w \rightarrow \gamma >0$, the largest eigenvalue of the sample covariance matrix converges to $w(1+\sqrt{\gamma})^2$ almost surely \citep{geman1980limit}. Here we use the Tracy-Widom law to characterize its limiting distribution and tail probabilities.
\end{remark}

If we ignore the temporal correlation of the largest eigenvalues produced by the sliding window, we can obtain a simple approximation for the ARL. If we call $p = \mathbb{P}_\infty(\lambda_{\max}(\hat{\Sigma}_{t,w} ) > b)$ for $t \geq w$ then the probability to stop at $t$ is geometric and it is easy to see that the ARL can be expressed as $1/p$. We note that to obtain this result,  we must assume that $\mathbb{P}_\infty(\lambda_{\max}(\hat{\Sigma}_{t,w} ) > b)=p$ for $t<w$ as well, which is clearly not true. Since for $t<w$ the un-normalized sample covariance has less than $w$ terms, the corresponding probability is smaller than $p$. This suggests that $1/p$ is a lower bound to the ARL while $w+1/p$ an upper bound. If $w\ll 1/p$, where $\ll$ means much less than, then approximating the ARL with $1/p$ is quite acceptable. We can use the Tracy-Widom law to obtain an asymptotic expression relating the ARL with the threshold $b$. The desired formula is depicted in the following theorem.

\begin{theorem}[Approximation of ARL by ignoring temporal correlation]
For any $0<p\ll1$ we have $\mathbb{E}_\infty[T_{\rm E}]\approx 1/p$, if we select 
\begin{equation}
b = \sigma_{w,k}b_p + \mu_{w,k}, 
\label{eq:threshold1}
\end{equation}
where $b_p$ denotes the $p$-upper-percentage point of $W_1$ namely $\mathbb{P}(W_1\geq b_p)=p$.
\label{ARL1}
\end{theorem}

Now we aim to capture the temporal correlation between detection statistics due to overlapping time windows. We leverage a proof technique developed in \cite{SiegmundYakirZhang2010}, which can obtain satisfactory approximation for the tail probability of the maximum of a random field. 
\begin{figure}[!b]
\vspace{-0.1in}
\centerline{
\includegraphics[width = 0.6\textwidth]{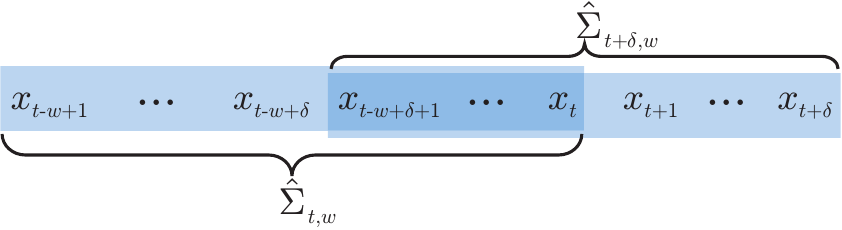} 
}
\vspace{-0.1in}
\caption{Illustration of the temporal correlation between largest eigenvalues, here $\delta \in \mathbb{Z}^+$.}
\label{overlapping}
\end{figure}

Figure\,\ref{overlapping} illustrates the overlap of two sample covariance matrices and provides necessary notation. For each $\hat{\Sigma}_{t,w}$,  define $Z_{t}=\lambda_{\max}(\hat{\Sigma}_{t,w})$. We note that for any given $M > 0$,
\[
\mathbb{P}_\infty(T\leq M) = \mathbb{P}_\infty\left(\max_{1\leq t\leq M} Z_t  \geq b \right),
\]
which is the max over a set of correlated variables $\{ Z_t\}_{t=1}^M$. 
Capturing the temporal dependence of $\{Z_t\}$ is challenging. Below, we assume the dimension $k$ and the window size $w$ are fixed, and consider local covariance structure of the detection statistic when they only non-overlap at a small shift $\delta$ relative to the window size, i.e., $\delta/w$ is small.
By leveraging the properties of the local approximation, we can obtain an asymptotic approximation using the localization theorem \citep{SiegmundYakirZhang2010}.  Define a special function $\nu(\cdot)$ which is closely related to the Laplace transform of the overshoot over the boundary of a random walk \citep{siegmund2007statistics}
\begin{equation}\label{eq:overshot}
\nu(x) \approx \frac{\frac{2}{x}[\Phi\big(\frac{x}{2}\big)-0.5]}{\frac{x}{2}\Phi\big(\frac{x}{2}\big)+\phi\big(\frac{x}{2}\big)}, 
\end{equation}
where $\phi(x)$ and $\Phi(x)$ are the PDF and cumulative distribution function (CDF) of the standard normal distribution $\mathcal{N}(0,1)$. Then we have the following results.
\begin{theorem}[ARL of Largest-Eigenvalue Shewhart chart]
For large values of $b$ we can write
\begin{equation}
\mathbb{E}_\infty[T_{\rm E}] = \left[b'\phi(b')\beta_{k,w}\nu(b'\sqrt{2\beta_{k,w}/w})/w\right]^{-1}\big(1+o(1)\big),
\label{eq:ARL}
\end{equation}
where \[
\beta_{k,w} = 1+\frac{\left(1+c_1 k^{-\frac16}/\sqrt{w}\right)\left(2+c_1k^{-\frac16}/\sqrt{w}\right)}{c_2^2 k^{-\frac13}/w}, \quad b' = \frac{b - (\mu_{w,k} + \sigma_{w,k}c_1)}{\sigma_{w,k}c_2},\]
with $c_1 = \mathbb{E}[W_1]=-1.21$ and $c_2=\sqrt{{\rm Var}(W_1)}= 1.27$.
\label{ARL2}
\end{theorem}
We perform simulations to verify the accuracy of the threshold values obtained without and with considering the temporal correlation (Theorem\,\ref{ARL1} and Theorem\,\ref{ARL2}, respectively). The results are shown in Table\,\ref{eq:threshold}. Compared with the thresholds obtained from Monte Carlo simulation, we find that the threshold, when temporal correlation \eqref{eq:ARL} is taken into account, is more accurate than its counterpart obtained by using the Tracy-Widom law \eqref{eq:threshold1}.

\begin{table}[!b]
\begin{center}
\vspace{-0.2in}
\caption{Comparison of the threshold $b$ obtained from simulations and using the approximations ignoring the correlation in \eqref{eq:threshold1}, and considering the correlation in \eqref{eq:ARL}. Window length $w=200$, dimension $k = 10$. The numbers shown are $b/w$. Approximations that are closer to simulation values are indicated in boldface.}
\vspace{0.2in}
\begin{tabular}{|c|c|c|c|c|c|c|}
\hline
Target ARL & 5k  &  10k  &  20k  & 30k &  40k & 50k\\
\hline
\hline
Simulation   &    1.633  &  1.661 &   1.688   & 1.702   & 1.713  &  1.722   \\
\hline
Approx \eqref{eq:threshold1} & 1.738 & 1.763 & 1.787 & 1.800 &  1.809 & 1.816\\
\hline
Approx \eqref{eq:ARL} & {\bf 1.699}  &  {\bf 1.713}  &  {\bf 1.727}   & {\bf 1.735} &   {\bf 1.740}  &  {\bf 1.744} \\
\hline
\end{tabular}
\label{eq:threshold}
\end{center}
\end{table}

\subsubsection{Lower-bound to EDD of Largest-Eigenvalue Shewhart chart}
We now focus on the detection performance and present a tight lower bound for the EDD of the Largest-Eigenvalue Shewhart chart. The results are based on a known result for CUSUM \citep{siegmund2013sequential} and requires the derivation of the Kullback-Leibler divergence for our problem. 
\begin{theorem}
For large values of $b$ we have
\begin{equation}
\mathbb{E}_0[T_{\rm E}] \geq 2\frac{b^\prime + e^{-b^\prime}-1}{-\log(1+\rho)+\rho}\big(1+ o(1)\big),
\label{eq:EDD}
\end{equation}
where
\[
b' = \frac1{2\sigma^2(1+\rho)}\left[b\rho-(1+\rho)\sigma^2\log(1+\rho)\right].
\]
\label{EDD}
\end{theorem}
\begin{figure}[!b]
\vspace{-0.1in}
\centerline{\includegraphics[width = 0.5\textwidth]{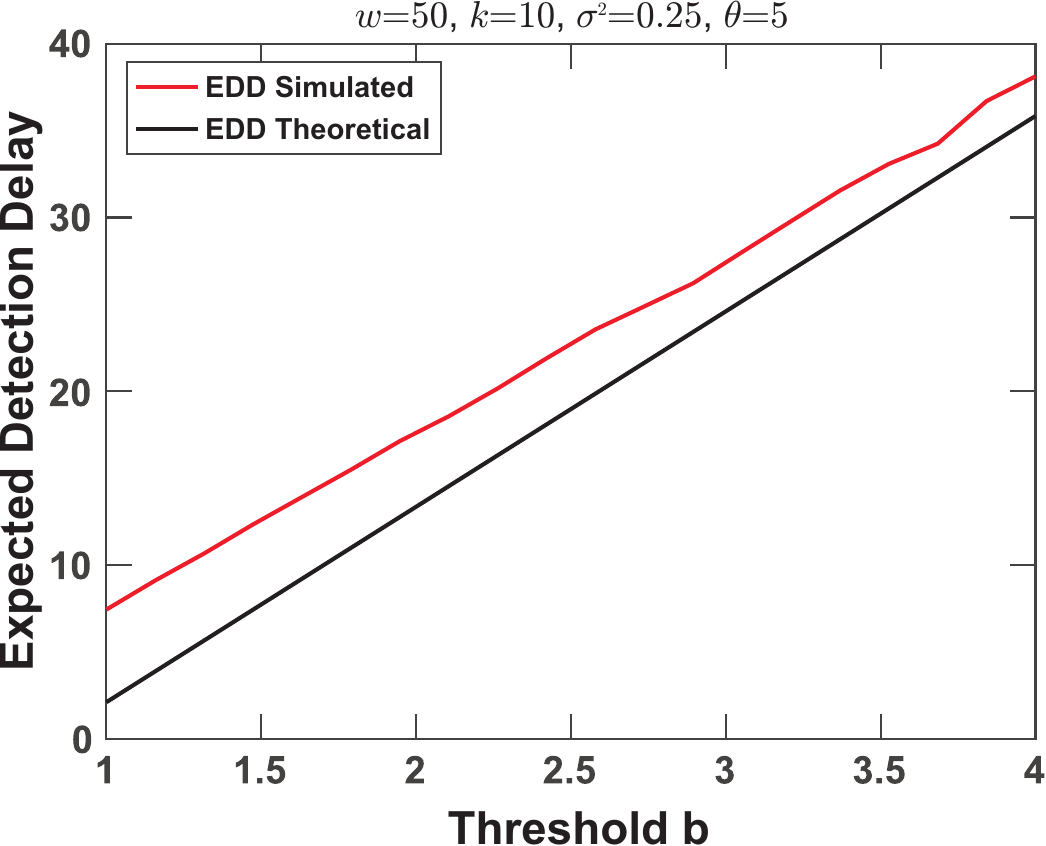}}
\vspace{-0.1in}
\caption{Simulated EDD and lower bound as a function of the threshold $b$.}
\label{fig:EDD}
\end{figure}

Consistent with intuition, in Theorem\,\ref{EDD}, the right-hand-side of \eqref{eq:EDD} is indeed a decreasing function of the SNR $\rho$. Comparing the lower bound in Theorem\,\ref{EDD} with simulated average delay, as shown in Figure\,\ref{fig:EDD}, we can show that in the regime of small detection delay (which is the main regime of interest), the lower bound serves as a reasonably good approximation.

\subsection{Analysis of Subspace CUSUM} \label{sec:metric} 

In this section, we focus on how to set the drift parameter $d$ for Subspace-CUSUM procedure. This is an important parameter for the Subspace-CUSUM to achieve desired properties of change-point detection algorithms. For the drift parameter $d$ we need the following double inequality to be true
\begin{equation}
\mathbb{E}_\infty[(\hat{u}_{t+w}^\intercal x_t)^2] < d < \mathbb{E}_0[(\hat{u}_{t+w}^\intercal x_t)^2].
\label{expected_d}
\end{equation}
With \eqref{expected_d} we can guarantee that $\mathcal{S}_t$ mimics the behavior of the exact CUSUM statistic $S_t$ mentioned in Remark\,\ref{drift_property_1}, namely, it exhibits a negative drift before and a positive after the change. As we mentioned, the main advantage of using $\hat \Sigma_{t+w,w}$ is that it provides estimates $\hat u_{t+w}$ that are \textit{independent} from $x_t$. This independence property allows for the straightforward computation of the two expectations in \eqref{expected_d} and contributes towards the proper selection of $d$. Note that under the pre-change distribution we can write
\begin{equation}
\mathbb{E}_\infty[(\hat{u}_{t+w}^\intercal x_t)^2] =  \mathbb{E}_\infty[ \hat{u}_{t+w}^\intercal\mathbb{E}_\infty[ x_t x_t^\intercal] \hat{u}_{t+w} ]= \sigma^2\mathbb{E}_\infty [\hat{u}_{t+w}^\intercal\hat{u}_{t+w}]= \sigma^2,
\label{drift_infty}
\end{equation}
where the first equation is due to the independence of $x_t$ and $\hat{u}_{t+w}$, the next one due to $x_t$ having covariance $\sigma^2I_k$ and the last equality due to $\hat{u}_{t+w}$ being of unit-norm.

Under the post-change regime, we need to specify the statistical behavior of $\hat u_{t+w}$ for the computation of $\mathbb{E}_0[(\hat{u}_{t+w}^\intercal x_t)^2]$. We will assume that the window size $w$ is sufficiently large so that Central Limit Theorem (CLT) approximations \citep{anderson1963asymptotic, paul2007asymptotics} are possible for $\hat u_{t+w}$. The required result appears in the next lemma.
\begin{lemma}
\label{lem:eigenvector}
Suppose vectors $x_1,\ldots,x_w$ are of dimension $k$ and follow the distribution  $\mathcal{N}(0,  \sigma^2I_k + \theta u u^\intercal)$. Let $\hat{\varphi}_w$ be the eigenvector corresponding to the largest eigenvalue of the sample covariance matrix $(1/w)(x_1x_1^\intercal+\cdots+x_wx_w^\intercal)$, then, as $w \rightarrow \infty$, we have the following CLT version for $\hat{\varphi}_w$
\[ 
\sqrt{w}(\hat{\varphi}_w - u)  \rightarrow \mathcal N\left(0, \frac{ 1+\rho}{\rho^2}  ( I_k - uu^\intercal)   \right).
\]
\end{lemma}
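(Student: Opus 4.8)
The plan is to reduce the statement to a standard first-order perturbation analysis of the top eigenvector of the sample covariance matrix, combined with the central limit theorem for the sample second moments of Gaussian data. First I would record the spectral decomposition of the population covariance $\Sigma = \sigma^2 I_k + \theta u u^\intercal$: its largest eigenvalue is $\lambda_1 = \sigma^2 + \theta$ with the \emph{simple} eigenvector $u$, while the remaining eigenvalue $\sigma^2$ has multiplicity $k-1$ and an eigenspace equal to the orthogonal complement of $u$. Fix any orthonormal basis $v_2,\ldots,v_k$ of that complement, so that $\{u, v_2, \ldots, v_k\}$ is an orthonormal basis of $\mathbb{R}^k$ and $\sum_{j=2}^k v_j v_j^\intercal = I_k - u u^\intercal$. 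Because the top eigenvalue is simple and separated from the rest by the spectral gap $\theta > 0$, the top eigenvector is, up to sign, a smooth function of the matrix near $\Sigma$, which is what makes a delta-method argument applicable.

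Next I would invoke the CLT for the sample covariance matrix $\hat\Sigma_w = (1/w)\sum_{i=1}^w x_i x_i^\intercal$, writing $\hat\Sigma_w = \Sigma + E_w/\sqrt{w}$, where by Isserlis' theorem for Gaussian fourth moments $E_w$ converges to a symmetric Gaussian matrix with $\mathrm{Cov}((E_w)_{ab},(E_w)_{cd}) \to \Sigma_{ac}\Sigma_{bd} + \Sigma_{ad}\Sigma_{bc}$. Substituting $\hat\varphi_w = u + \delta$ and $\hat\lambda_1 = \lambda_1 + \epsilon$ into the eigen-equation $\hat\Sigma_w \hat\varphi_w = \hat\lambda_1 \hat\varphi_w$, using $\Sigma u = \lambda_1 u$, and projecting the first-order balance onto each $v_j$ yields $(\lambda_1 - \sigma^2)\, v_j^\intercal \delta = v_j^\intercal E_w u/\sqrt{w}$; the component of $\delta$ along $u$ is forced to second order by the unit-norm constraint. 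With the remainder controlled as $o_p(1/\sqrt{w})$ thanks to the nonzero gap, this gives
\[
\sqrt{w}(\hat{\varphi}_w - u) = \sum_{j=2}^k \frac{v_j^\intercal E_w u}{\lambda_1 - \sigma^2}\, v_j + o_p(1) = \frac{1}{\theta}\sum_{j=2}^k (v_j^\intercal E_w u)\, v_j + o_p(1).
\]

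The final step is to evaluate the covariance of the Gaussian limit. Using the Isserlis covariance together with the eigen-relations $v_j^\intercal \Sigma v_{j'} = \sigma^2 \delta_{jj'}$, $u^\intercal \Sigma u = \sigma^2 + \theta$, and the crucial orthogonality $v_j^\intercal \Sigma u = (\sigma^2+\theta)\, v_j^\intercal u = 0$, the cross term drops out and I obtain $\mathrm{Cov}(v_j^\intercal E_w u, v_{j'}^\intercal E_w u) \to \sigma^2(\sigma^2+\theta)\,\delta_{jj'}$. Substituting into the perturbation expansion gives asymptotic covariance $\theta^{-2}\sigma^2(\sigma^2+\theta)\sum_{j=2}^k v_j v_j^\intercal = \theta^{-2}\sigma^2(\sigma^2+\theta)(I_k - uu^\intercal)$, and rewriting in terms of $\rho = \theta/\sigma^2$ collapses the scalar to $(1+\rho)/\rho^2$, which is the claimed limit.

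I expect the main obstacle to be the rigorous justification of the perturbation step rather than the moment bookkeeping: one must argue that the sign of $\hat{\varphi}_w$ can be fixed so that $\hat{\varphi}_w \to u$ in probability, and that the degenerate eigenspace does not corrupt the expansion. The latter is handled by observing that the limit depends on the repeated-eigenvalue directions only through the basis-independent projection $\sum_{j\geq 2} v_j v_j^\intercal = I_k - u u^\intercal$, so the non-uniqueness of $\{v_j\}$ is immaterial; only the simplicity of $\lambda_1$ is required, and that holds whenever $\theta > 0$. Alternatively, one may simply cite the classical asymptotic-normality results of Anderson and of Paul referenced in the text, whose general eigenvector covariance formula $\lambda_\ell \sum_{j\neq\ell}\lambda_j(\lambda_\ell-\lambda_j)^{-2} v_j v_j^\intercal$ specializes directly to the stated expression.
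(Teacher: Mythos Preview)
Your proposal is correct. The paper's own proof is much shorter: it simply invokes Anderson's classical result \cite{anderson1963asymptotic} that $\sqrt{w}(\hat\varphi_w-u)\xrightarrow{d}\mathcal N\big(0,\sum_{j\ge2}\lambda_1\lambda_j(\lambda_1-\lambda_j)^{-2}\nu_j\nu_j^\intercal\big)$ and then specializes by plugging in $\lambda_1=\sigma^2+\theta$, $\lambda_j=\sigma^2$ for $j\ge2$, and $\sum_{j\ge2}\nu_j\nu_j^\intercal=I_k-uu^\intercal$, which you mention yourself as the alternative route in your final paragraph. Your main argument instead rederives the Anderson formula in this special case from scratch, via the CLT for $\hat\Sigma_w$ together with a first-order eigenvector perturbation and Isserlis' theorem for the fourth moments; the covariance computation $\mathrm{Cov}(v_j^\intercal E_w u,\,v_{j'}^\intercal E_w u)\to (v_j^\intercal\Sigma v_{j'})(u^\intercal\Sigma u)+(v_j^\intercal\Sigma u)(u^\intercal\Sigma v_{j'})=\sigma^2(\sigma^2+\theta)\delta_{jj'}$ is correct and the simplification to $(1+\rho)/\rho^2$ is immediate. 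The trade-off is that the paper's proof is a one-line citation but leans on a nontrivial black box, whereas yours is self-contained and makes transparent exactly where the spectral gap $\theta>0$ and the Gaussianity of the data enter, at the cost of having to say a word about controlling the $o_p(1/\sqrt{w})$ remainder and the sign convention for $\hat\varphi_w$.
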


Lemma\,\ref{lem:eigenvector} provides an asymptotic statistical description of the \textit{un-normalized} estimate of $u$. More precisely it characterizes the estimation error $v_w=\hat{\varphi}_w-u$. In our case we estimate the eigenvector from the matrix $\hat \Sigma_{t+w,w}$ but, as mentioned before, we adopt a \textit{normalized} (unit norm) version $\hat{u}_{t}$. Therefore if we fix $w$ at a sufficiently large value and $v_t$ denotes the estimation error of the un-normalized estimate at time $t$ then, from Lemma\,\ref{lem:eigenvector}, we can deduce
\begin{equation*}\label{hat_u_separate}
 \hat{u}_{t+w} = \frac{\hat{\varphi}_{t+w}}{\|\hat{\varphi}_{t+w}\|}=\frac{u + v_{t+w}}{\left\Vert u + v_{t+w} \right\Vert}, \quad v_{t+w} \sim \mathcal{N}\left(0, \frac{1+\rho}{w\rho^2}(I_k-uu^\intercal)\right). 
 \end{equation*}
Consequently
 \begin{equation}\label{lower_bound}
\mathbb{E}_0\left[(\hat{u}_{t+w}^\intercal x_t)^2\right]  =  \sigma^2(1+\rho)\left( 1 - \frac{k-1}{w\rho} +o\left(\frac{1}{w}\right)\right), 
 \end{equation}
with the $o(\cdot)$ term being negligible compared to the other two when $k/w\ll1$, where $a=o(b)$ denotes that $a/b\rightarrow0$.

Consider now the case where $\rho$ is {\it unknown} but exceeds some pre-set minimal SNR $\rho_{\min}$. From the above derivation, given the worst-case SNR and an estimation for the noise variance $\hat{\sigma}^2$, we can give a lower bound for $\mathbb{E}_0[(\hat{u}_{t+w}^\intercal x_t)^2]$. Consequently, the drift $d$ can be anything between 
$\hat{\sigma}^2$
  and 
  $\hat{\sigma}^2(1+\rho_{\min}) (1-  (k-1)/(w\rho_{\min}) )$ where, we observe, that the latter quantity exceeds $\hat{\sigma}^2$ when $w > (k-1)(1+\rho_{\min})/\rho_{\min}^2$. Below, for simplicity, for $d$ we use the average of the two bounds. 
 It is worthwhile mentioning that the lower bound \eqref{drift_infty} and upper bound \eqref{lower_bound} are derived based on the assumption that the window size $w$ is large enough. 

\begin{remark}[Monte Carlo simulation to choose the threshold]\label{rem:monte}
Alternatively, and in particular when $w$ does not satisfy $w\gg k$, we can estimate $\mathbb{E}_0[(\hat{u}_{t+w}^\intercal x_t)^2]$ by Monto Carlo simulation. 
This method requires: (i) estimating the noise level $\hat{\sigma}^2$, which can be obtained from training data without a change-point; (ii) the pre-set worst-case SNR $\rho_{\min}$; (iii) a unit norm vector $u_0$ that is generated randomly. Under the nominal regime we have $\mathbb{E}_\infty[(\hat{u}_{t+w}^\intercal x_t)^2]=\hat{\sigma}^2$. Under the alternative $\mathbb{E}_0[(\hat{u}_{t+w}^\intercal x_t)^2]$ depends only on the SNR $\rho$ as shown in \eqref{lower_bound}. We can therefore simulate the worst-case scenario $\rho_{\min}$ using the randomly generated vector $u_0$ by generating samples from the distribution $\mathcal{N}(0,  \hat{\sigma}^2I_k + \rho_{\min}u_0u_0^\intercal )$.  
\end{remark}

Even though the average of the update in \eqref{sscusum_update} does not depend on true subspace $u$, the computation of the test statistic $\mathcal{S}_t$ \eqref{sscusum_update} requires the estimate $\hat{u}_{t+w}$ of the eigenvector. This can be accomplished by applying the singular value decomposition (SVD) (or the power method \citep{mises1929praktische}) on the un-normalized sample covariance matrix $\hat \Sigma_{t+w,w}$.

\section{SIMULATION STUDY} \label{sec:numerical}
In this section, numerical results are presented to compare the proposed detection procedures. The tests are first
applied to synthetic data, where the performance of the Subspace-CUSUM and Largest-Eigenvalue Shewhart chart are compared against the CUSUM optimum performance. Then the performance of Subspace-CUSUM is optimized by selecting the most appropriate window size.

\subsection{Performance Comparison}

Simulation studies are performed to compare the Largest-Eigenvalue Shewhart chart and the Subspace CUSUM procedure. The exact CUSUM procedure with all parameters known is chosen as the baseline and gives the minimal detection delay to all detection procedures. 

\begin{figure}[!b]
\centering
\begin{tabular}{ccc}
\includegraphics[width = 0.3\textwidth]{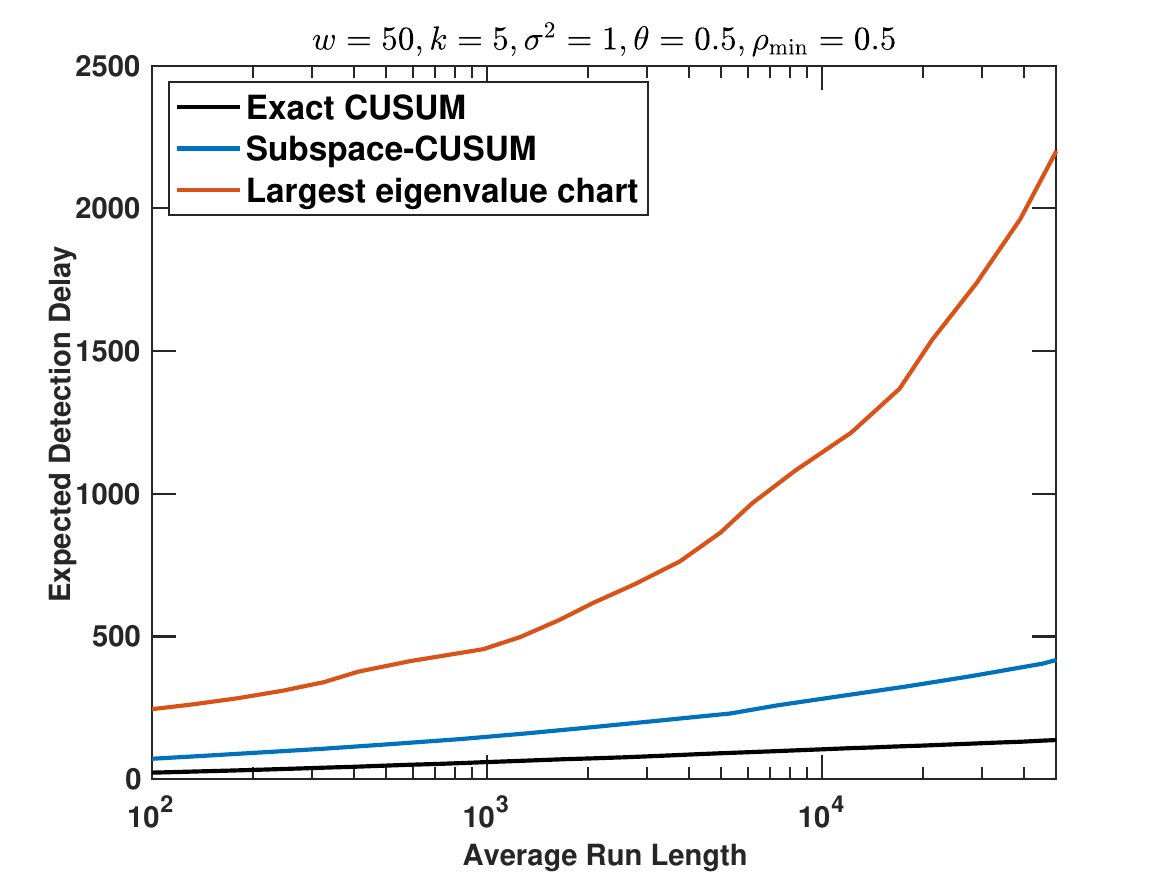} & \includegraphics[width = 0.3\textwidth]{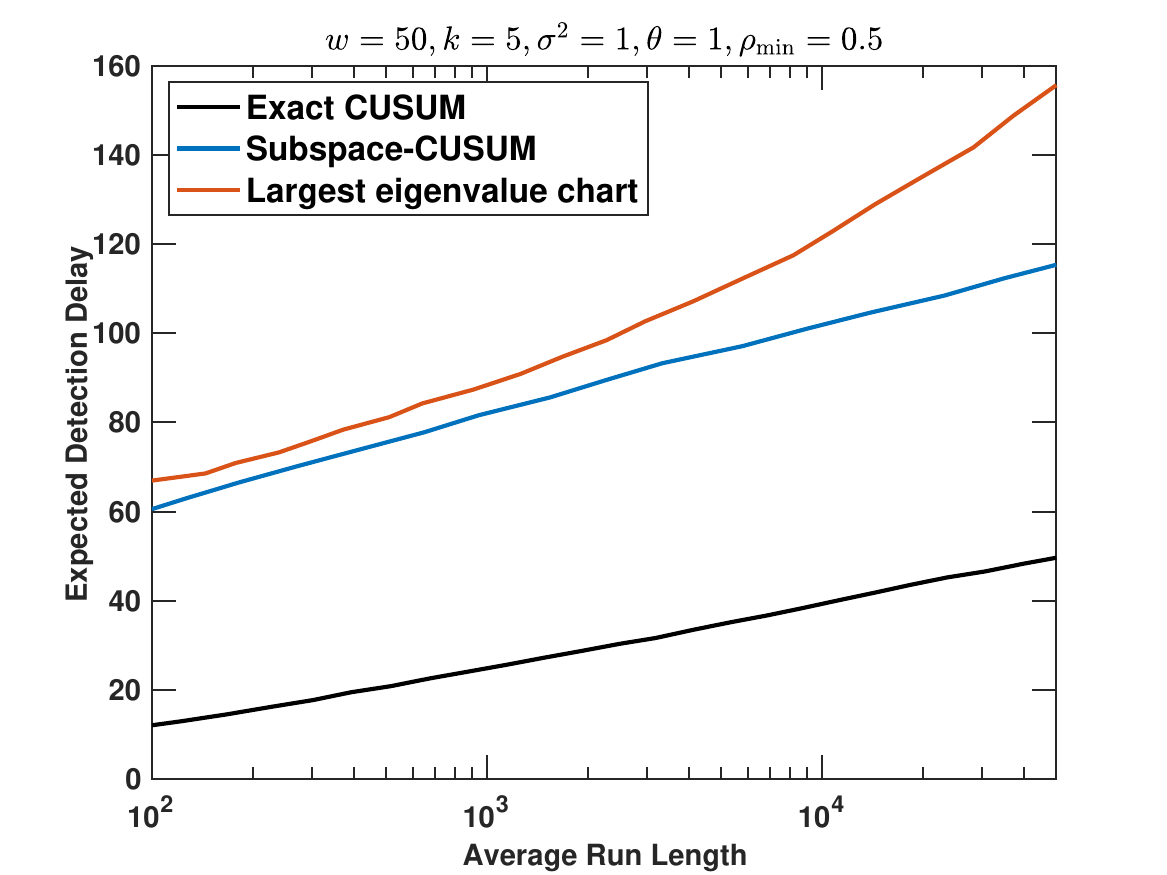}  & 
\includegraphics[width = 0.3\textwidth]{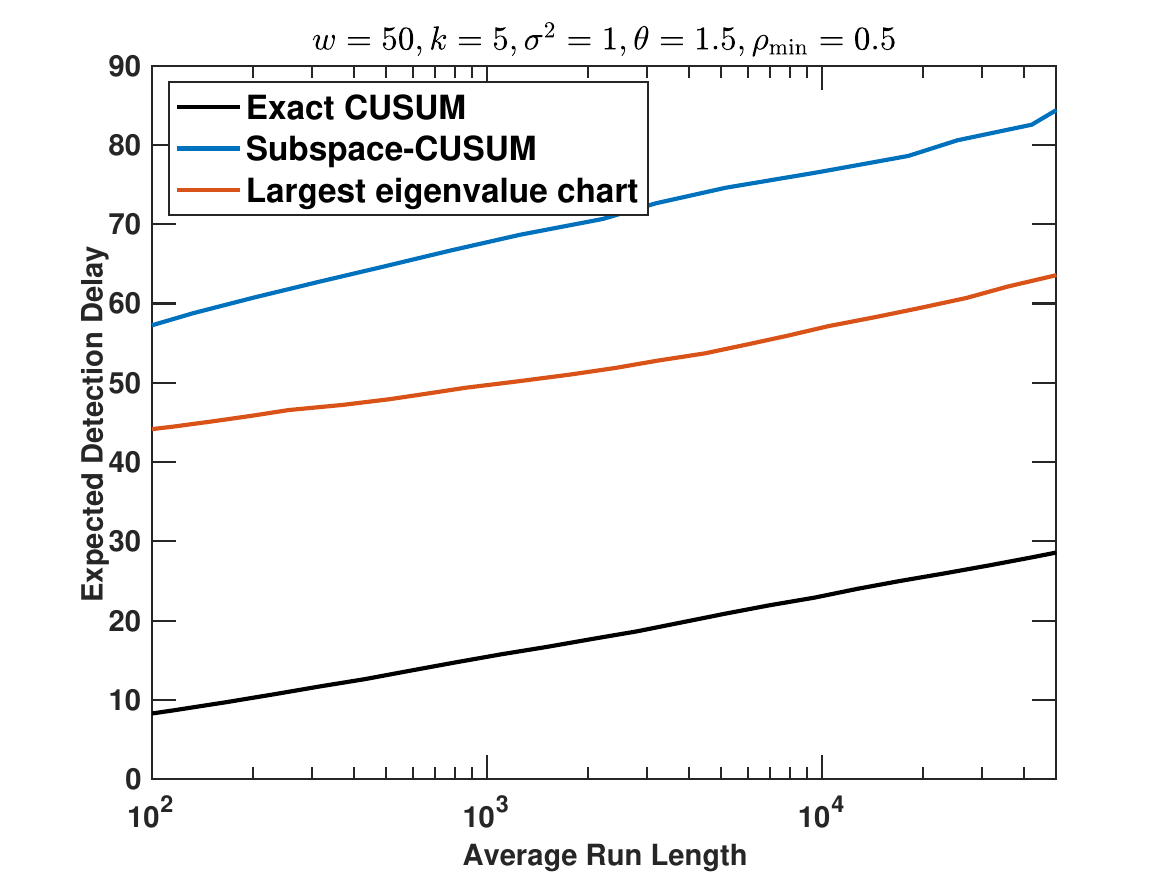}  \\ 
(a) $\theta = 0.5$ & (b) $\theta=1$ & (c) $\theta=1.5 $\\
\end{tabular}
\caption{Comparison of Subspace-CUSUM and the Largest-Eigenvalue Shewhart chart, fixed window size $w=50$. Baseline: Exact CUSUM (optimal).}
\label{fig:eddcompare1}
\end{figure}

Figure\,\ref{fig:eddcompare1} depicts EDD versus log-ARL for parameter values $k=5$, $\sigma^2 = 1$, $w=50$ and three different levels of signal strength (SNR): $\theta=0.5$, $\theta=1$, and $\theta=1.5$. For fair comparison, the SNR lower bound is set to be a constant $\rho_{\min}=0.5$ in all three scenarios. 

The threshold for each procedure is determined using the pre-set lower bound $\rho_{\min}$ as discussed in Remark\,\ref{rem:monte}. In Figure\,\ref{fig:eddcompare1}, the black line corresponds to the exact CUSUM procedure, which is clearly the best and it lies below the other curves. 
Subspace-CUSUM has much smaller EDD than the Largest-Eigenvalue Shewhart chart, and the difference increases with increasing ARL for SNR $\theta= 0.5$ and $\theta= 1$. However, when the signal is stronger ($\theta= 1.5$), the Largest-Eigenvalue Shewhart chart outperforms the Subspace-CUSUM as shown in Figure\,\ref{fig:eddcompare1} (c). 
This is consistent with previous research findings in \cite{neuburger2017comparison} that Shewhart charts are more efficient when detecting strong signals, while the CUSUM-type charts can detect weak signals more quickly due to its cumulative structure.

\subsection{Optimal Window Size}
\begin{figure}[!b]
\begin{center}
\begin{tabular}{cc}
\includegraphics[width = 0.4\textwidth]{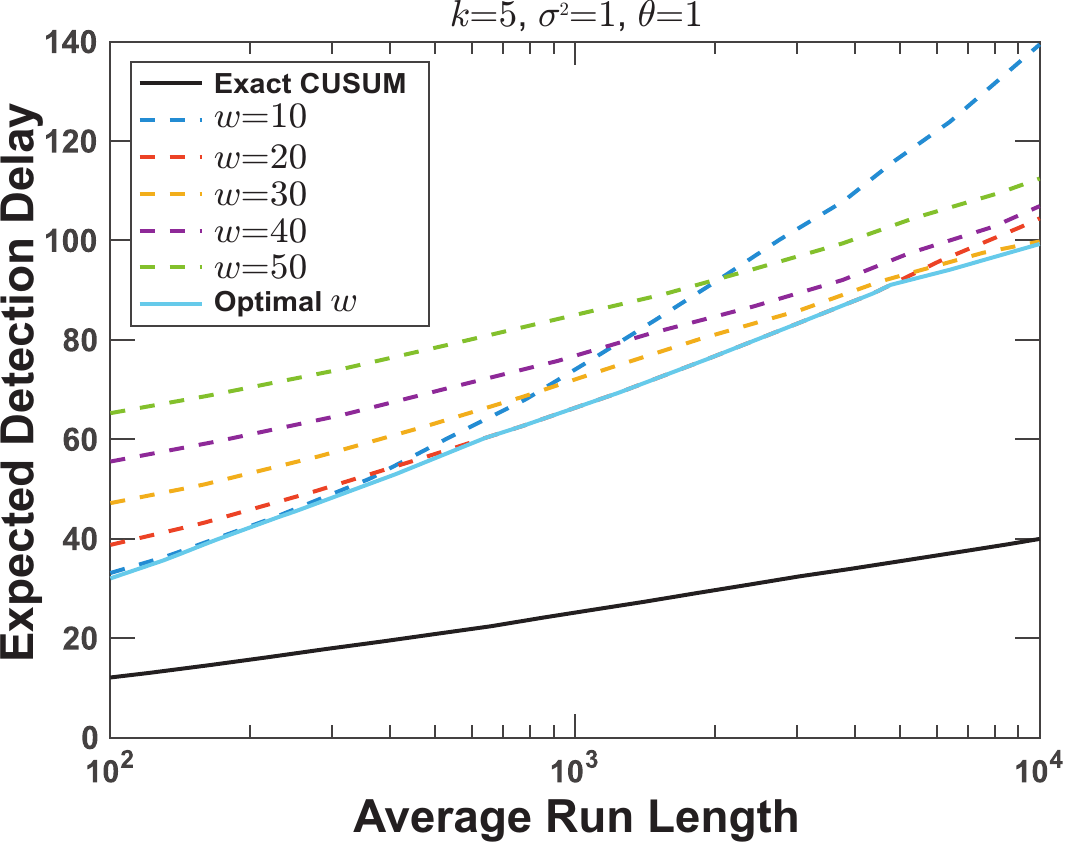} & \includegraphics[width = 0.4\textwidth]{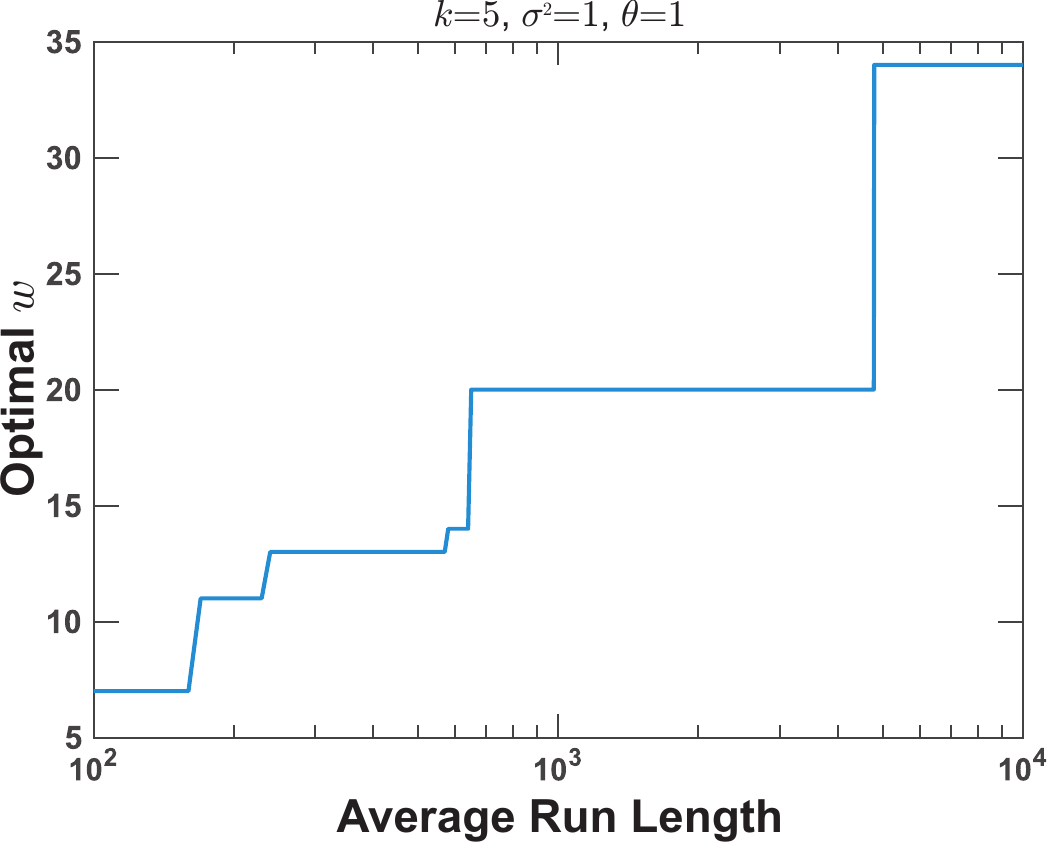}  \\
(a) Minimal EDD & (b): Optimal window size \\
\end{tabular}
\caption{ (a): Minimal EDD vs ARL among window sizes from 1 to 50; (b): Optimal window size resulting in smallest EDD.}
\label{fig:eddw}
\end{center}
\vspace{-0.2in}
\end{figure}
We also consider the EDD-ARL curve, where $w$ is optimized to minimize the detection delay at every ARL. We first compute the EDD for window sizes $w=1,2,\ldots,50,$ given each ARL value. Then we plot in Figure\,\ref{fig:eddw} (a) the lower envelope of EDDs corresponding to the optimal EDD achieved by varying $w$. We also plot the optimal value of $w$ as a function of ARL in Figure\,\ref{fig:eddw} (b). Even though the best EDD of the Subspace-CUSUM is diverging from the performance enjoyed by CUSUM, this divergence we believe is slower than the increase of the optimum CUSUM EDD. One of the goals in the future publication regarding the analysis of Subspace-CUSUM is to show that this is indeed the case, which in turn will demonstrate that this detection structure is first-order asymptotically optimum.

\section{REAL DATA EXAMPLES}\label{sec:data}
In this section, we show how to apply the proposed methods to real data problems and demonstrate the performance using two real datasets; one is the human gesture detection dataset, the other is a seismic dataset. It is worth mentioning that the model formulation is a fundamental problem in high dimensional problems, and the proposed methods are widely applicable to a variety of applications. 

\subsection{Human Gesture Detection}

We apply the proposed method to the sequential posture detection problem using a real dataset: the ``Microsoft Research Cambridge-12 Kinect gesture'' dataset \citep{fothergill2012instructing}. The cross-correlation structure of such multivariate functional data may change over time due to the posture change. \cite{zhang2018dynamic} studies the same dataset from the dynamic subspace learning perspective in the offline setting, our goal is to detect the change-point from sequential observations. This dataset contains 18 sensors. At each time $t$, each sensor records the coordinates in the three-dimensional cartesian coordinate system. Therefore there are 54 attributes in total.

We select a subsequence with a posture change from ``bow'' to ``throw'', and we use the first 250 training samples to estimate the subspace before the change. Figure\,\ref{fig:posture} (a) shows the eigenvalues of the principal component analysis (PCA). We select $r$ leading eigenvectors of the sample covariance matrix as our estimate of the pre-change subspace. For example, when $r=1$, we estimate the pre-change subspace to be a rank one space characterized by the leading eigenvector of the sample covariance matrix of training samples. Then we normalize the observations by multiplying them with a matrix $Q$ that is orthogonal to the pre-change subspace, as discussed in Section\,\ref{sec:formulation}. This enables us to approximate the covariance of pre-change observations by an {\it identity matrix}. Then we apply the proposed Subspace-CUSUM procedure to detect the change. 

The detection statistic is shown in Figure\,\ref{fig:posture} (b,c) for different $r$, the detection statistic indeed increase significantly at the true change-point time (indicated by the red dash line). It also shows that the proposed test performs well not only when $r=1$, but also for $r > 1$ cases, which means that although we focus on the rank one case in the previous discussion, the propose method can be widely used in many problems that involves such low-rank change.
\begin{figure}[!b]
\vspace{-0.1in}
\centering
\begin{tabular}{cc}
\includegraphics[width = 0.46\textwidth]{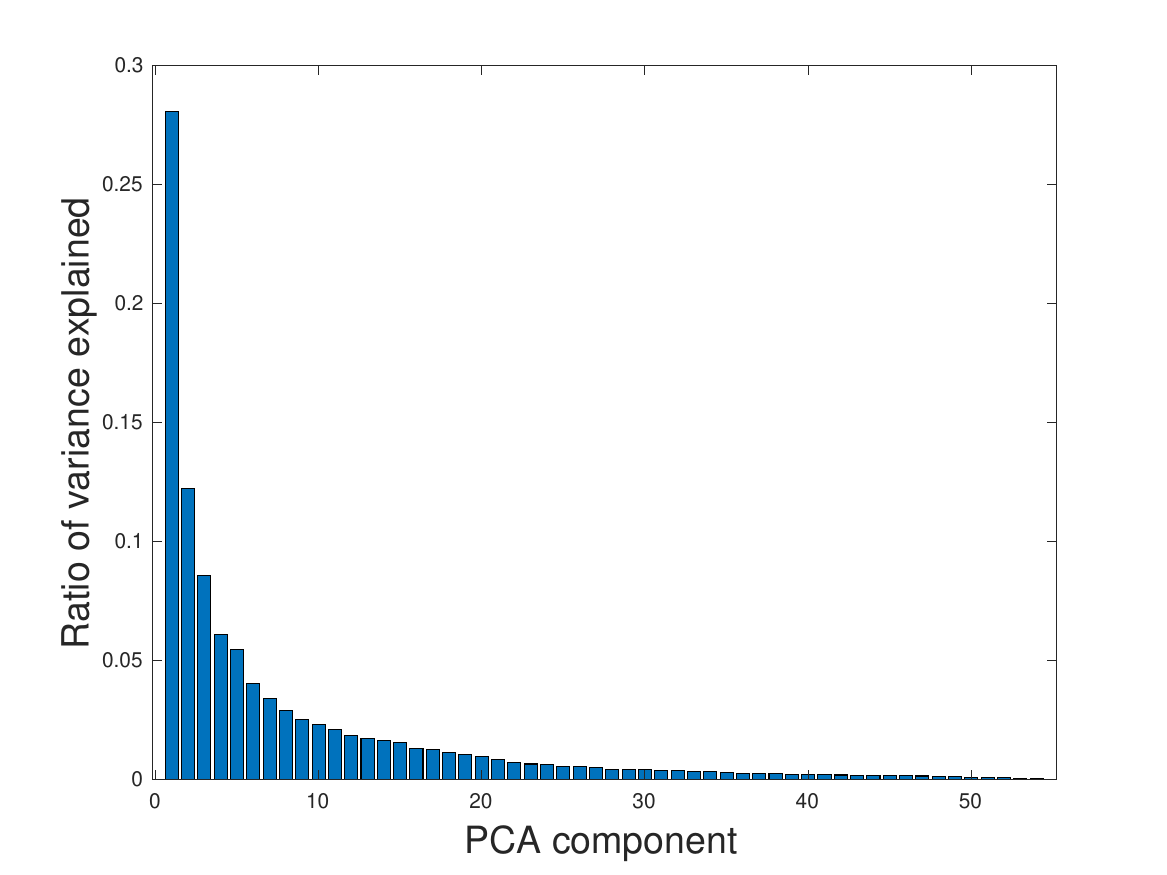} & \includegraphics[width = 0.46\textwidth]{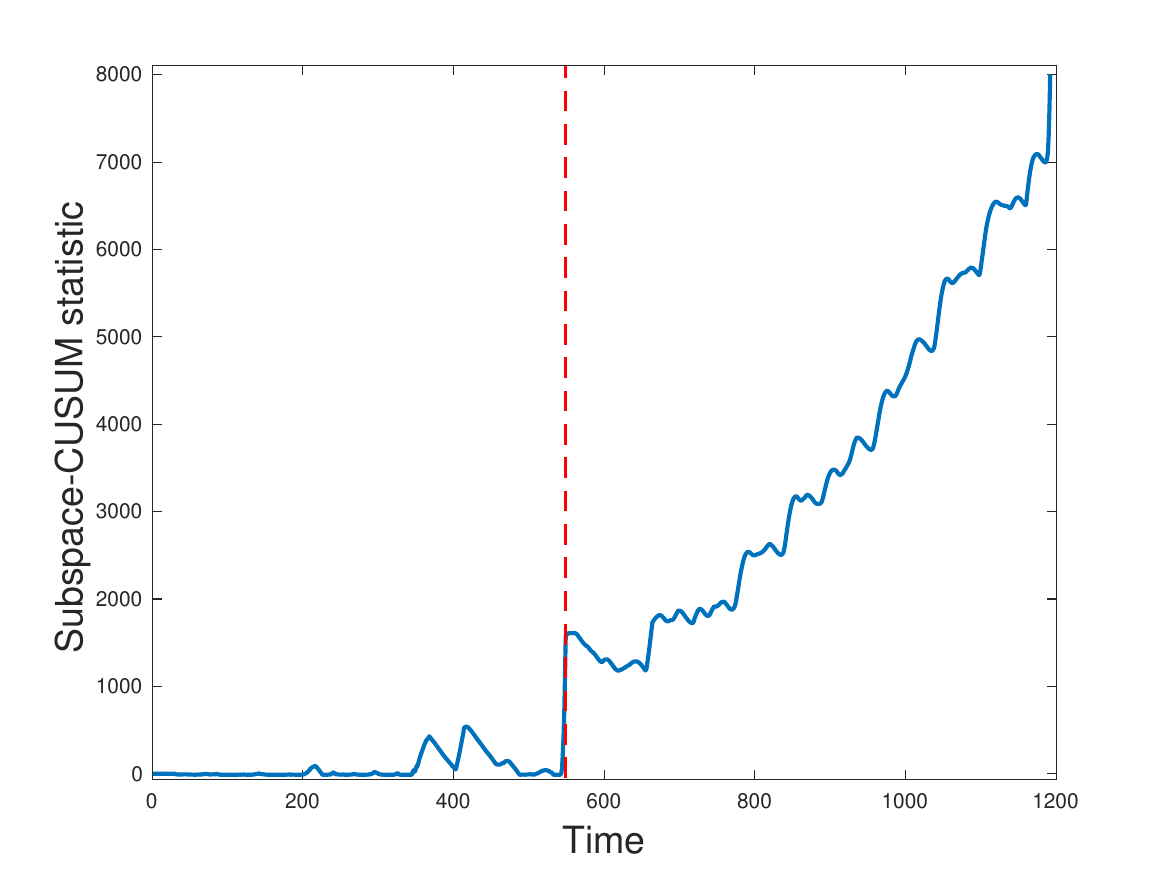} \\
(a) Eigenvalues  &  (b) $r=1$  \\
\includegraphics[width = 0.46\textwidth]{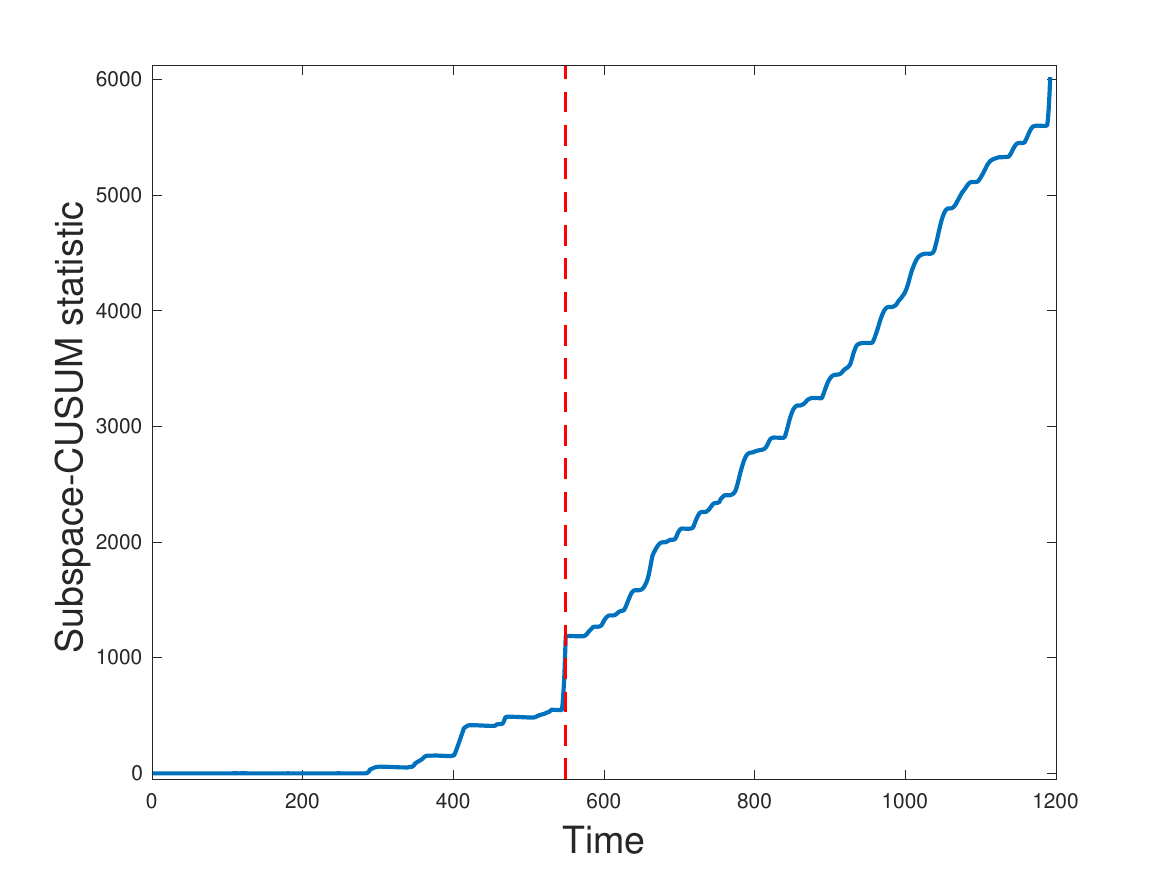} &  \includegraphics[width = 0.46\textwidth]{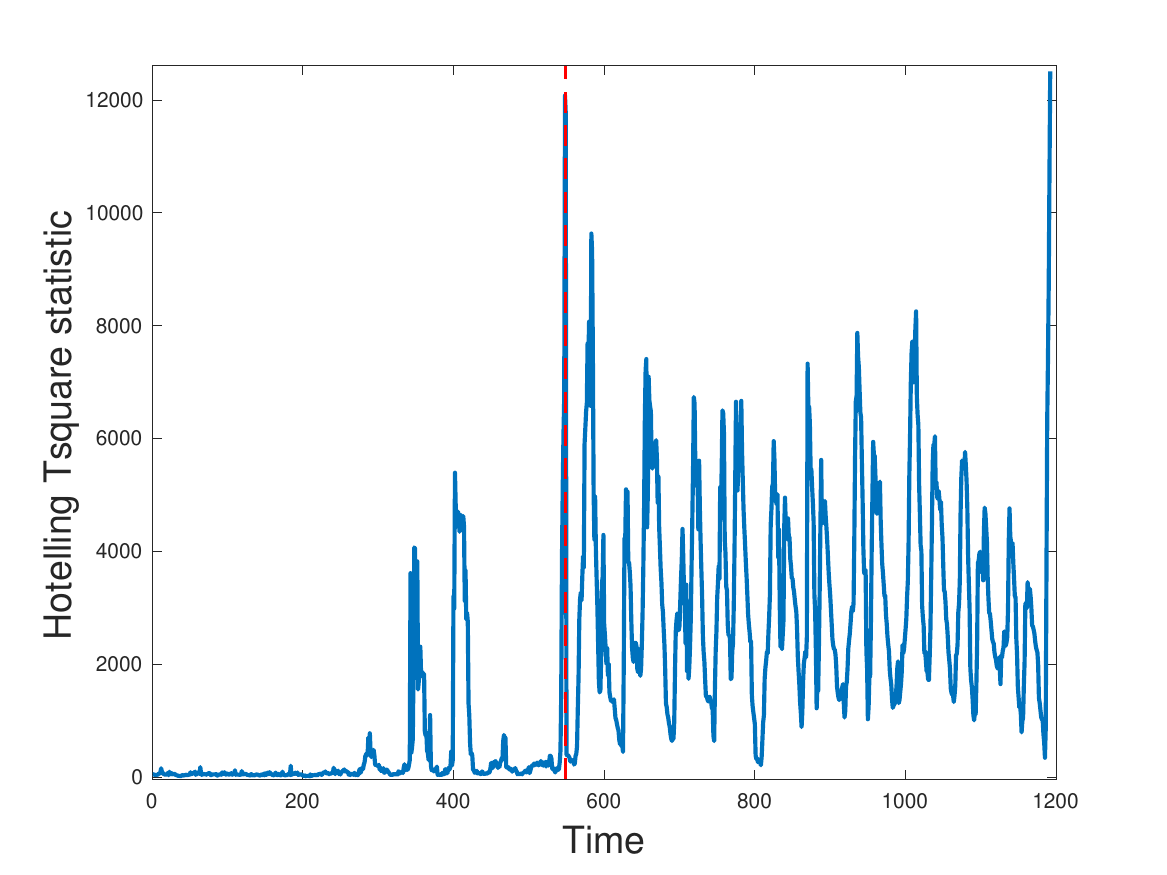}  \\
 (c) $r=10$ & (d) Hotelling $T^2$ \\
\end{tabular}
\vspace{-0.1in}
\caption{(a): PCA Eigenvalues; (b,c): Subspace-CUSUM statistic over time; (d): Hotelling $T^2$ statistic. True change-point indicated by red line.}
\label{fig:posture}
\end{figure}
We also compare the proposed method with Hotelling $T^2$ control chart \citep{hotelling1947multivariate}. We use the same training data to estimate the pre-change mean $\bar\mu$ and covariance matrix $\bar{\Sigma}$, and then construct the Hotelling $T^2$ statistics $(x_t - \bar\mu)^T\bar{\Sigma}^{-1}(x_t - \bar\mu)$. As shown in Figure\,\ref{fig:posture} (d), the detection statistic has a much larger vibration then the Subspace-CUSUM procedure and the performance is sensitive to the estimation of $\bar\mu$ and $\bar\Sigma$.

\subsection{Seismic Event Detection}

In this example, we consider a seismic signal detection problem. The goal is to detect micro-earthquakes and tremor-like signals, which are weak signals caused by minor subsurface changes in the earth. 
The tremor signal may be seen at a subset of sensors, and the affected sensors observe a similar waveform corrupted by noise. The tremor signals are not earthquakes, but they are useful for geophysical study and prediction of potential earthquakes. Usually, the tremor signals are challenging to detect using an individual sensor's data; therefore, network detection methods have been developed, which mainly uses covariance information of the data for detection \citep{li2018high}. We will show that network-based detection can be cast as a subspace detection problem.

Assume that we have $N$ sensors. At an unknown onset, the tremor signal may affect all sensors. Let $s(t)$ be the unknown signal waveform , then the signal observed at sensors can be represented as
\begin{equation}\label{model_seismic}
x_{i}(t) = u_i s(t-\tau) + w_{i}(t), \ i =1,2,\ldots, n,
\end{equation}
where $w_{i}(t) \stackrel{\text{iid}}{\sim} \mathcal{N}(0,\sigma^2)$ denotes the random noise, $u_i > 0$ is the unknown {\it deterministic} magnitude of the signal, and $\tau$ is the unknown change-point or the time when the seismic event happens. Here the waveform function $s(t)$ is assumed to be causal, i.e., $s(t)=0, \forall t<0$. Moreover, we suppose the signal waveform at time time follows a zero-mean normal distribution with time-varying variance (vibration), i.e., $s(t) \sim \mathcal N(0,\sigma_t^2)$.  
 
 \begin{figure}[!b]
\centering
\begin{tabular}{cc}
 \includegraphics[width = 0.46\textwidth]{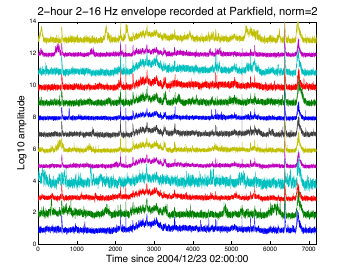}  & \includegraphics[width = 0.46\textwidth]{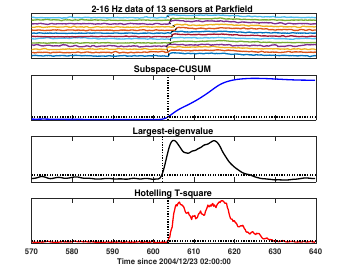} \\   (a) & (b) \\
 \includegraphics[width = 0.46\textwidth]{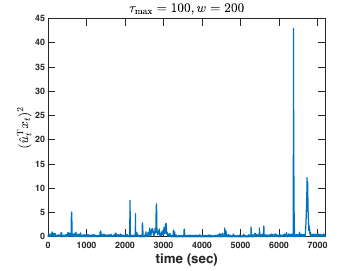}  &   \includegraphics[width = 0.46\textwidth]{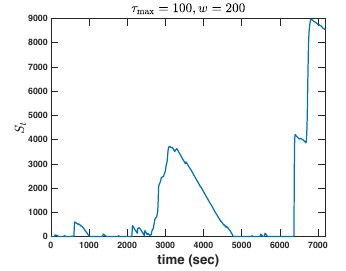} \\
 (c) & (d) \\
 \end{tabular}
\caption{(a) the raw data; (b) comparison of different detection procedures; (c) increment term; (d) Subspace-CUSUM statistic.}
\label{fig:DP1_detection_0_1000s}
\end{figure}

Denote the observation vector $X(t) := [ x_1(t), \ldots, x_n(t)]^\intercal$, and magnitudes $u := [ u_1, \ldots, u_n]^\intercal$. Following \eqref{model_seismic}, we can formulate the problem as follows 
 \begin{equation}
\begin{array}{ll}
X(t) \stackrel{\text{iid}}{\sim} \mathcal{N}(0,\sigma^2I_n), & t = 1,2,\ldots,\tau,\\
X(t) \stackrel{\text{iid}}{\sim} \mathcal{N}(0,  \sigma^2I_n+ \sigma_t^2 uu^\intercal), &t = \tau+1,\tau+2,\ldots.
\end{array} 
\label{eq:hypothesis_seismic}
\end{equation}

We apply the proposed methods to a real seismic dataset recorded at Parkfield, California from 2 am to 4 am on 12/23/2004. The raw data contains records at 13 seismic sensors that simultaneously records a continuous stream of data. The frequency of the raw data is 250Hz. In this example, we set the window size $w = 200$, which corresponds to a $0.8s$ time window. For each procedure, we use the data within the first $600$s to find the threshold by controlling the false alarm rate. 

 We apply the proposed Largest-Eigenvalue Shewhart chart and the Subspace-CUSUM procedure. We further compare them with the classic Hotelling $T^2$ procedure based on the estimated sample mean and sample covariance. The results are shown in Figure\,\ref{fig:DP1_detection_0_1000s}(b). Using the detection statistics in Figure\,\ref{fig:DP1_detection_0_1000s}(c,d), we find three main events at the 615, 2127, and 6371 seconds, as well as some continuous vibration during 2500$\sim$3200 seconds. By comparing the detection results with the true seismic event catalog that can be found online at the Northern California Earthquake Data Center, we found that our findings match the three true events at 594, 2124 and 6369 seconds, along with a tremor catalog around 2500$\sim$3180 seconds. The comparison shows that all detection delays are within 20 seconds. Both the Largest-Eigenvalue Shewhart chart and the Subspace-CUSUM procedure work for this dataset effectively.

\section{CONCLUSION AND DISCUSSIONS}\label{sec:conclusion}
In this paper, we study two online detection procedures for detecting the emergence of a spiked covariance model: the Largest-Eigenvalue Shewhart chart and the Subspace-CUSUM control chart. For Subspace-CUSUM, we perform a simultaneous estimate of the required subspace in parallel with its sequential detection. We avoid estimating all unknown parameters by following a worst-case analysis with respect to the subspace power. We were able to derive theoretical expressions for the ARL and an interesting lower bound for the EDD of the Largest-Eigenvalue Shewhart chart. In particular, we were able to handle the correlations resulted from the usage of a sliding window, which is an issue that is not present in the off-line version of the same procedure. For the comparisons of the two proposed detection procedures, we discuss how it is necessary to calibrate each detector so that comparisons are fair. Comparisons were performed using simulated data and real data about human gesture detection and seismic event detection.

\appendix

\section*{Appendix A: Proof of Lemma\,\ref{lem:eigenvector}}
We have the following asymptotic distribution \citep{anderson1963asymptotic}: 
\[
\frac{1}{\sqrt{w}} (\hat{\varphi}_w- u)  \xrightarrow{d} \mathcal{N}\Big(0, \sum\limits_{j = 2}^k \frac{\lambda_1\lambda_j}{(\lambda_1- \lambda_j)^2}\nu_j\nu_j^\intercal\Big),
\]
where $\lambda_j$ are the $j$th largest eigenvalue of the true covariance matrix and $\nu_j$ are the corresponding eigenvector. In our case the true covariance matrix is $\sigma^2I_k + \theta u u^\intercal$, therefore $\lambda_1 = \sigma^2 + \theta$ and $\lambda_j = \sigma^2$ for $j \geq 2$, and $\{\nu_j, j\geq 2 \}$ is a basis of the orthogonal space of $u$. 
Thus we have
\[
\begin{aligned}
&\sum\limits_{j = 2}^k \frac{\lambda_1\lambda_j}{(\lambda_1- \lambda_j)^2}\nu_j\nu_j^\intercal = \frac{\sigma^2(\sigma^2 + \theta)}{\theta^2}\sum\limits_{j = 2}^k \nu_j\nu_j^\intercal = \frac{\sigma^2(\sigma^2 + \theta)}{\theta^2}(  I_k  - u u^\intercal) = \frac{( 1+\rho)}  {\rho^2} ( I_k - uu^\intercal) .
 \end{aligned}
\]
This completes the proof.

\section*{Appendix B: Proof of Theorem\,\ref{ARL2}}
In order to prove Theorem\,\ref{ARL2}, we need the following lemma to characterize the local correlation between largest eigenvalue statistics. 
\begin{customlem}{B.1}[Approximation of local correlation]\label{CovZ}
Let $c_1 = \mathbb{E}[W_1]=-1.21$, $c_2=\sqrt{{\rm Var}(W_1)}= 1.27$, and 
\begin{equation*}
\beta_{k,w} = 1+\frac{\left(1+c_1 k^{-\frac16}/\sqrt{w}\right)\left(2+c_1k^{-\frac16}/\sqrt{w}\right)}{c_2^2 k^{-\frac13}/w}.
\label{corr_approx}
\end{equation*}
Then we have,
\begin{equation*}
{\rm corr}(Z_t,Z_{t+\delta}) \leq 1 - \beta_{k,w}\vartheta+ o(\vartheta),
\label{corr}
\end{equation*}
where $\vartheta = \delta/w$ and ${\rm corr}(X, Y)$ stands for the Pearson's correlation 
\[
{\rm corr}(X, Y) = \frac{\mathbb{E}[X Y] - \mathbb E[X] \mathbb E[Y]}{\sqrt{{\rm Var}(X)}\sqrt{{\rm Var}(Y)}}.
\]
\end{customlem}
\begin{proof}
Under the pre-change measure, $x_t \stackrel{\rm iid}{\sim} \mathcal{N}(0, \sigma^2I_k)$. For $\delta \in \mathbb{Z}^+$, let 
\[
P=\sum\limits_{i=t-w + 1}^{t-w+\delta}x_ix_i^\intercal,  \quad 
Q=\sum\limits_{i=t-w+\delta + 1}^{t} x_ix_i^\intercal,  \quad
R=\sum\limits_{i=t + 1}^{t+\delta} x_ix_i^\intercal.
\]
Then $P$, $Q$ and $R$ are independent random matrices. Now we also want to give a general upper bound for the covariance between $Z_{t}$ and $Z_{t+\delta}$. Then we have
\begin{equation*}\label{eq:covariance_upper_bound}
\begin{aligned}
\mathbb{E}[Z_{t}Z_{t+\delta}]& =\mathbb{E}[\lambda_{\max}(\hat{\Sigma}_{t,w})\lambda_{\max}(\hat{\Sigma}_{t+\delta,w})] = \mathbb{E}[\lambda_{\max}(P+Q)\lambda_{\max}(Q+R)]\\
&\leq\mathbb{E}\left\{ \left[ \lambda_{\max}(P)+\lambda_{\max}(Q)\right] \left[ \lambda_{\max}(Q)+\lambda_{\max}(R)\right] \right\} \\
&=\mathbb{E}[\lambda_{\max}^2(Q)]+\mathbb{E}[\lambda_{\max}(Q)]\mathbb{E}[\lambda_{\max}(R)]\\
&\quad+\mathbb{E}[\lambda_{\max}(P)]\{\mathbb{E}[\lambda_{\max}(Q)]+\mathbb{E}[\lambda_{\max}(R)]\},
\end{aligned}
\end{equation*}
where the inequality is due to the fact that the largest eigenvalue of the sum of two nonnegative definite matrices is upper bounded by the sum of the corresponding largest eigenvalues of the two matrices.
The mean and second-order moments can be computed using the Tracy-Widom law depicted in \eqref{eq:tw}. 

Since $k$ is a fixed constant, we just write $\mu_{n}$ and $\sigma_n$ instead of $\mu_{n,k}$ and $\sigma_{n,k}$ to simplify our notation. We first consider the covariance term $\mathbb{E}[Z_{t}Z_{t+\delta}]$ and decompose it into four parts as following:
$$
\frac1{w^2}\mathbb{E}[Z_{t}Z_{t+\delta}]\leq A+B+C+D,
$$
where
\begin{align*}
A&=\left(\frac{\mu_{w(1-\vartheta)}+c_1\sigma_{w(1-\vartheta)}}{w}\right)^2,\\
B&=\left(\frac{c_2\sigma_{w(1-\vartheta)}}{w}\right)^2,\\
C&=2\left[\frac{\mu_{w(1-\vartheta)}+c_1\sigma_{w(1-\vartheta)}}{w}\right]\left(\frac{\mu_{w\vartheta}+c_1\sigma_{w\vartheta}}{w}\right),\\
D&=\left(\frac{\mu_{w\vartheta}+c_1\sigma_{w\vartheta}}{w}\right)^2.
\end{align*}

First, the common terms $\mu_{w(1-\vartheta)}/w$ and $\sigma_{w(1-\vartheta)}/w$ can be written as
\begin{align*}
\frac{\mu_{w(1-\vartheta)}}{w}&= \frac 1w \left[ \sqrt{w(1-\vartheta)-1}+\sqrt{k}\right]^2 = \frac{w(1-\vartheta)-1}{w} \left[1+\sqrt{\frac{k}{w(1-\vartheta)-1}} \right]^2\\
&~\dot= \frac{w(1-\vartheta)-1}{w} \dot= 1-\vartheta.
\end{align*}
where the second term in the bracket was ignored because $k/w = o(1)$. Moreover, we have
\begin{equation*}
\frac{\sigma_{w(1-\vartheta)}}{w} = \frac 1w \left(\sqrt{w(1-\vartheta)-1}+\sqrt{k}\right) \\
\cdot \left(\frac1{\sqrt{w(1-\vartheta)-1}}+\frac1{\sqrt{k}}\right)^{1/3},
\end{equation*}
after extracting the term $\sqrt{w(1-\vartheta)-1}$ from the first bracket and $1/\sqrt{k}$ from the second bracket, we obtain
\begin{align*}
\frac{\sigma_{w(1-\vartheta)}}{w}  &= \frac{k^{-\frac16}}{w}\sqrt{w(1-\vartheta)-1}\left(1+\sqrt{\frac{k}{w(1-\vartheta)-1}}\right)^{\frac43} \dot= \sqrt{\frac{1-\vartheta}{w}}k^{-\frac16},
\end{align*}
where the second term in the bracket was ignored because $k/w = o(1)$. 
Plug these two terms into the first part  we have:
\begin{align*}
A &= \left(\frac{\mu_{w(1-\vartheta)}+c_1\sigma_{w(1-\vartheta)}}{w}\right)^2 \dot=  \left(1-\vartheta + c_1\sqrt{\frac{1-\vartheta}{w}}k^{-\frac16}\right)^2 \\
&= (1-\vartheta)\left(1-\vartheta+2c_1\frac{k^{-\frac16}}{\sqrt{w}}\sqrt{1-\vartheta}+c_1^2\frac{k^{-\frac13}}{w}\right).
\end{align*}
Since $\vartheta$ is relatively small, $\sqrt{1-\vartheta} = 1 - \frac12\vartheta + o(\vartheta)$ by Taylor expansion. Then the term is approximately
\begin{align*}
A&\dot=  (1-\vartheta)\left(1-\vartheta+2c_1\frac{k^{-\frac16}}{\sqrt{w}}(1-\frac12\vartheta+o(\vartheta))+c_1^2\frac{k^{-\frac13}}{w}\right)\\
& = \left(1+c_1\frac{k^{-\frac16}}{\sqrt{w}}\right)^2 \!\!\!-\!\! \left(1+c_1\frac{k^{-\frac16}}{\sqrt{w}}\right)\!\!\left(2+c_1\frac{k^{-\frac16}}{\sqrt{w}}\right)\vartheta \!+\! o(\vartheta),
\end{align*}
where the higher order terms of $\vartheta$ are included in $o(\vartheta)$. 
Parts $C$ and $D$ can be considered negligible, since $C$ is order $\mathcal O(\vartheta)$ and $D$ is order $o(\vartheta)$. In summary, we have
\begin{align*}
{\rm corr}(Z_t,Z_{t+\delta}) & =\frac{\mathbb{E}[Z_t Z_{t+\delta}] - \mathbb E[Z_t] \mathbb E[Z_{t+\delta}]}{\sqrt{{\rm Var}(Z_t)}\sqrt{{\rm Var}(Z_{t+\delta})}}\\
&\lesssim  \frac1{\left(\frac{c_2k^{-\frac16}}{\sqrt{w}}\right)^2}\Bigg\{\left(1+c_1\frac{k^{-\frac16}}{\sqrt{w}}\right)^2  + c_2^2 \frac{1-\vartheta}{w}k^{-\frac13} -  \left(1+c_1\frac{k^{-\frac16}}{\sqrt{w}}\right)\left(2+c_1\frac{k^{-\frac16}}{\sqrt{w}}\right)\vartheta\\
& \qquad - \left(1+ c_1\frac{k^{-\frac16}}{\sqrt{w}}\right)^2 + o(\vartheta)\Bigg\} \\
& = 1-  \beta_{k,w}\vartheta + o(\vartheta)
\end{align*}
This completes the proof.
\end{proof}

The key of proving Theorem\,\ref{ARL2} is to quantify the tail probability of the detection statistic. However, this probability is very small when the threshold is large \citep{li2015m}. Therefore we use the change-of-measure technique in \cite{SiegmundYakirZhang2010} to recenter the process mean to the threshold, so that the tail probability becomes much higher. First, the detection statistic is standardized by: 
\[ Z'_t = \frac{Z_t - \mathbb{E}_\infty[Z_t]}{\text{Var}_\infty(Z_t)},\]
here $ \mathbb{E}_\infty[Z_t]$ and $\text{Var}_\infty(Z_t)$ depends only on $k$ and $w$, but does not depend on $t$. Then $Z'_t$ has zero mean and unit variance under the $\mathbb{P}_\infty$ measure. We are interested in finding the probability $\mathbb{P}_\infty(T_{\rm E}\leq M) = \mathbb{P}_\infty(\max_{1\le t\le M} Z'_t > b)$. We now prove our proposition in four steps. 

\noindent {\it Step 1. Exponential tilting. } 
Denote the cumulant generating function of $Z'_t$ by
\begin{equation*}
\psi(a) = \log\mathbb{E}_\infty[e^{a Z'_t}].
\end{equation*}
Define a family of new measures
\begin{equation*}
\frac{d\mathbb{P}_t}{d\mathbb{P}_\infty} = \exp\{a Z'_t - \psi(a)\},
\end{equation*}
where $\mathbb{P}_t$ denotes the new measure after the transformation. The new measure takes the form of the exponential family, and $a$ can be viewed as the natural parameter. It can be verified that $\mathbb{P}_t$ is indeed a probability measure 
since 
\begin{equation*}
\int d\mathbb{P}_t = \int \exp\{a Z'_t-\psi(a)\}d\mathbb{P} = 1.
\end{equation*}
It can also be shown that $\dot{\psi}(a)$ is the expected value of $Z'_t$ under $\mathbb{P}_t$, since
$$
\dot{\psi}(a)=\frac{\mathbb{E}_\infty[Z'_te^{a Z'_t}]}{\mathbb{E}_\infty[e^{a Z'_t}]} = \mathbb{E}_\infty[Z'_te^{a Z'_t-\psi(a)}] = \mathbb{E}_t[Z'_t],
$$
and similarly $\ddot{\psi}(a)$ is the variance under the tilted measure. We use the Gaussian approximation for $Z_t'$, then its log moment generating function is $\psi(a) = a^2/2$. We set $a=b$ such that $\dot{\psi}(a) = \mathbb{E}_t[Z'_t] = b$, therefore the tail probability after measure transformation will become much larger. Given this choice, the transformed measure is given by $d\mathbb{P}_t = \exp(bZ'_t - b^2/2)d\mathbb{P}_\infty$. We also define, for each $t$, the log-likelihood ratio $\log(d\mathbb{P}_t/d\mathbb{P}_\infty)$ of the form
\begin{equation*}
\ell_t = b Z'_t - \frac{1}{2}b^2.
\end{equation*}

\noindent {\it Step 2. Change-of-measure by the likelihood ratio identity.} 
Now we convert the original problem of finding the small probability that the maximum of a random field exceeds a large threshold, to another problem: finding an alternative measure under which the event happens with a much higher probability. By likelihood ratio identity, we have:
\begin{equation}\tag{B.1}\label{eq:tail_prob}
\begin{aligned}
 \mathbb{P}_\infty(\max\limits_{1\le m \le M} Z'_m \geq b) 
& = \mathbb{E}_\infty [\mathbbm{1}_{\{ \max\limits_{1\le m \le M} Z'_m \geq b\}}]\nonumber = \mathbb{E}_\infty \left[\frac{\sum_{t=1}^{M}e^{\ell_t}}{\sum_{n=1}^{M}e^{\ell_n}} \cdot \mathbbm{1}_{\{\max\limits_{1\le m \le M} Z'_m \geq b\}}\right]\\
&=  \sum_{t=1}^M \mathbb{E}_\infty \left[\frac{e^{\ell_t}}{\sum_n e^{\ell_n}} \cdot \mathbbm{1}_{\{ \max\limits_{1\le m \le M} Z'_m \geq b\}} \right]\\
&=   \sum_{t=1}^M \mathbb{E}_t \left[\frac{1}{\sum_n e^{\ell_n}} \cdot \mathbbm{1}_{\{ \max\limits_{1\le m \le M} Z'_m \geq b\} }\right]\\
& =  e^{-b^2/2} \sum_{t=1}^M \mathbb{E}_t \left[\frac{M_t}{S_t}e^{-(\tilde{\ell}_t +\log M_t)} \cdot \mathbbm{1}_{\{ \tilde{\ell}_t +\log M_t \geq 0 \} }\right],
\end{aligned}
\end{equation}
where $M_t$ and $S_t$ in the last step is defined as the maximum and sum of likelihood ratio differences as:
\begin{equation*}
M_t = \max_{m \in \{1, \ldots, M\}} e^{\ell_m -\ell_t}, \quad S_t = \sum_{m \in \{1, \ldots, M\}} e^{\ell_m -\ell_t}.
\end{equation*}
And $\tilde{\ell}_t $ is defined as the re-centered likelihood ratio, or the so-called global term:
\begin{equation*}
\tilde{\ell}_t = b(Z'_t -b).
\end{equation*}
The last equation in \eqref{eq:tail_prob} converts the tail probability to a product of two terms: a deterministic term $e^{-b^2/2}$ associated with the large deviation rate, and a sum of expectations under the transformed measures. The expectation involves a product of the ratio $M_t/S_t$ and an exponential function that depends on $\tilde{\ell}_t$, which plays the role of a weight. Under the new measure $\mathbb{P}_t$, $\tilde{\ell}_t$ has zero mean and variance equal to $b^2$ and it dominates the other term $\log M_t$, hence, the probability of exceeding zero is much higher. Next, we characterize the limiting ratio and the other factors precisely, by the localization theorem.

\noindent {\it Step 3. Establish Properties of Local and Global Terms.} 
In \eqref{eq:tail_prob}, our target probability has been decomposed into terms that only depend on (i) the local field $\{\ell_m - \ell_t\}, 1 \le m \le M$, which are the differences between the log-likelihood ratios with parameter $t$ and $m$, and (ii) the global term $\tilde{\ell}_t$, which is the centered and scaled likelihood ratio with parameter $t$. We need to first establish some useful properties of the local field and global term before applying the localization theorem. We will eventually show that the local field and the global term are asymptotically independent.

For the local field $\{\ell_m - \ell_t\}$, let $r_{m,t}$ denote the correlation between $Z_m^\prime$ and $Z_t^\prime$,  then we have
\[
\begin{aligned}
\mathbb{E}_t(\ell_m - \ell_t) &= -b^2(1-r_{m,t}),\\
\mbox{Var}_t(\ell_m - \ell_t) &= 2b^2(1-r_{m,t}),\\
\mbox{Cov}_t(\ell_{m_1} - \ell_t,\ell_{m_2} - \ell_t) &= b^2(1+r_{m_1,m_2} - r_{m_1,t} - r_{m_2,t}).
\end{aligned}
\]
We have Lemma\,\ref{CovZ} to characterize the local correlation, which offers reasonably good approximation for $\mathbb{E}[Z_{t}Z_{t+\delta}]$ and leads to $r_{m,t} \approx 1 - \left|m-t\right|\beta_{k,w}/w$.

Since we assume $Z'_t$ is approximately Gaussian, the local field $\ell_m - \ell_t$ and the global term $\tilde{\ell}_t$ are also approximately Gaussian.
Therefore, when $|\delta|$ is small (i.e., in the neighborhood of zero), we can approximate the local field using a two-sided Gaussian random walk with drift $b^2\beta_{k,w}/w$ and variance of the increment equal to $2b^2\beta_{k,w}/w$:
\begin{equation*}
\ell_{t+\delta} - \ell_t \stackrel{\Delta}{=} b\sqrt{\frac{2\beta_{k,w}}{w}} \sum\limits_{i=1}^{|\delta|} \xi_i - b^2\frac{\beta_{k,w}}{w}\delta, \delta = \pm1,\pm2,\ldots,
\label{eq:local_decomp}
\end{equation*}
where $\xi_i$ are i.i.d. standard normal random variables.

\noindent {\it Step 4. Approximation using localization theorem.} 
From the argument in \cite{siegmund2000tail}, let $\hat M_t$ and $\hat S_t$ denote the maximization and summation restricted to the small neighborhood of $t$. Then they are asymptotically independent of the global term $\tilde{\ell}_t$. Moreover, under the tilted measure,
\begin{equation*}
\mathbb{E}_t[\tilde{\ell}_t] = 0, \quad \text{Var}_t[\tilde{\ell}_t] = b^2.
\end{equation*}
Therefore the density $\mathbb{P}_t(\tilde{\ell}_t)$ can be approximated by $1/\sqrt{2\pi b^2}$ in a neighborhood of radius $o(1/b)$ of zeros. The inner expectation in \eqref{eq:tail_prob} can be approximated as
\[
\mathbb{E}_t \left[\frac{M_t}{S_t}e^{-(\tilde{\ell}_t +\log M_t)} \cdot \mathbbm{1}_{\{ \tilde{\ell}_t +\log M_t \geq 0 \} }\right] \dot= \frac{ \mathbb{E}_t (\hat M_t/\hat S_t )} {b\sqrt{2\pi b^2}}.
\]
By \cite{siegmund2000tail}, the expectation $  \mathbb{E}_t (\hat M_t/\hat S_t ) $ does not depend on $t$ and equals to $b^2\beta_{k,w}\nu(b\sqrt{2\beta_{k,w}/w})/w$ in the asymptotic regime. 
Substitute into \eqref{eq:tail_prob}, we have
\begin{align*}
\mathbb{P}_\infty(T\le M) & = \mathbb{P}_\infty\left( \max_{1\le t\le M} Z'_t > b \right) \\
&= e^{-b^2/2}\sum\limits_{t=1}^M\mathbb{E}_t \left[ \frac {M_t}{S_t}e^{-[\tilde{\ell}_t+\log M_t]}\cdot \mathbbm{1}_{\{ \tilde{\ell}_t+\log M_t\ge0\}} \right]\\
&\dot= Mb\phi(b)\beta_{k,w}\nu(b\sqrt{2\beta_{k,w}/w})/w,
\end{align*}
where $\nu(\cdot)$ is the function defined in \eqref{eq:overshot}. From the above cumulative distribution function, we can approximate $T$ as exponential distribution, yielding the mean value $1/[b\phi(b)\beta_{k,w}\nu(b\sqrt{2\beta_{k,w}/w})/w]$.

Since $Z'_t$ is standardized, here the threshold $b$ need to be converted to the original threshold using a simple formula \[b' = \left[ b - ( \mu_{w,k} + c_1 \sigma_{w,k}) \right]/ (c_2 \sigma_{w,k}).\]
This completes the proof.

\section*{Appendix C: Proof of Theorem\,\ref{EDD}}

We first relate the largest eigenvalue procedure to a CUSUM procedure, note that 
\begin{equation}\tag{C.1}
\lambda_{\max}(\hat{\Sigma}_{t,w}) = \max_{\|q\|=1}
q^\intercal \hat{\Sigma}_{t,w} q.
\label{eq:q}
\end{equation}
For each $q$, we have
\[
q^\intercal \hat{\Sigma}_{t,w} q =  \sum_{i=t-w+1}^t (q^\intercal x_i)^2.
\]
According to the Grothendieck's Inequality \citep{guedon2016community}, the $q$ that attains the maximum in equation \eqref{eq:q} is very close to $u$ under the alternative. Therefore, assuming the optimal $q$ always equals to $u$ will only cause a small error but will bring great convenience to our analysis. 

Now we have under $\mathbb{P}_\infty$, $q^\intercal x_i \sim \mathcal{N}(0, \sigma^2)$ and under $\mathbb{P}_0$, $q^\intercal x_i \sim \mathcal{N}(0, \sigma^2+\theta)$. Let $f_\infty$ denote the pdf of $\mathcal{N}(0, \sigma^2)$ and $f_0$ the pdf of $\mathcal{N}(0, \sigma^2+\theta)$. For each observation $y$, we can derive the one-sample log-likelihood ratio:
\[
\log\frac{f_0(y)}{f_\infty(y)} = -\frac12\log(1+\rho)+\frac1{2\sigma^2}\left(1-\frac1{1+\rho}\right)y^2.
\]
Define the CUSUM procedure
\begin{equation*}
\widetilde{T}= \inf\bigg\{ t : \max_{0\leq k<t}\sum_{i=k+1}^t \Big[\frac1{2\sigma^2}\left(1-\frac1{1+\rho}\right)(q^\intercal x_i)^2 -\frac{\log(1\!+\!\rho)}{2}\Big] \! \geq \! b'\bigg\},
\end{equation*}
where $b' = \frac1{2\sigma^2}(1-\frac1{1+\rho})(b-\frac{\sigma^2\log(1+\rho)}{1-1/(1+\rho)})w$, we then have
\[
\mathbb{E}_0[T_{\rm E}] \geq \mathbb{E}_0[\widetilde{T}].
\]
Since $\widetilde{T}$ is a CUSUM procedure with 
\[
\int\log\left[\frac{f_0(y)}{f_\infty(y)}\right]f_0(y)dy=-\frac12\log(1+\rho)+\frac{\rho}{2},
\]
by \cite{siegmund2013sequential}, we have:
\[
\mathbb{E}_0[\widetilde{T}]=\frac{e^{-b^\prime}+b^\prime-1}{-\log(1+\rho)/2+\rho/2}.
\]
This completes the proof.

\section*{Acknowledgements}
The work of Liyan Xie and Yao Xie was supported by US National Science Foundation (NSF) CCF-1442635, CMMI-1538746, DMS-1830210, and the Career Award CCF-1650913. The work of George V. Moustakides was supported by NSF CIF-1513373, through Rutgers University. The authors would like to thank the Editor and the anonymous referees for the thoughtful comments and suggestions, which led to an improvement of the presentation. 


\end{document}